\numberwithin{equation}{section}
\numberwithin{theorem}{section}
\journalname{Applied Mathematics and Optimization}
\begin{document}

\title{Accelerating two projection methods via perturbations with application to intensity-modulated radiation therapy}


\author{Esther Bonacker \and
         Aviv Gibali \and Karl-Heinz K\"{u}fer
}


\institute{Esther Bonacker \at
              Optimization Department, Fraunhofer ITWM, 67663 Kaiserslautern, Germany\\
              \email{bonacker@itwm.fraunhofer.de}
           \and
           \Letter Aviv Gibali \at
              Department of Mathematics, ORT Braude College, Karmiel 2161002, Israel\\
              The Center for Mathematics and Scientific Computation, University of Haifa, Mt. Carmel, Haifa 3498838, Israel.\\
              \email{avivg@braude.ac.il}
           \and
           Karl-Heinz K\"{u}fer \at
           Optimization Department,  Fraunhofer ITWM, 67663 Kaiserslautern, Germany\\
           \email{kuefer@itwm.fraunhofer.de}
}

\date{Received: date / Accepted: date}

\maketitle

\begin{abstract}

Constrained convex optimization problems arise naturally in many real-world applications. One strategy to solve them in an approximate way is to translate them into a sequence of convex feasibility problems via the recently developed level set scheme and then solve each feasibility problem using projection methods. However, if the problem is ill-conditioned, projection methods often show zigzagging behavior and therefore converge slowly.	

To address this issue, we exploit the bounded perturbation resilience of the projection methods and introduce two new perturbations which avoid zigzagging behavior. The first perturbation is in the spirit of $k$-step methods and uses gradient information from previous iterates. The second uses the approach of surrogate constraint methods combined with relaxed, averaged projections.

We apply two different projection methods in the unperturbed version, as well as the two perturbed versions, to linear feasibility problems along with nonlinear optimization problems arising from intensity-modulated
radiation therapy (IMRT) treatment planning. We demonstrate that for all the considered problems the perturbations can significantly accelerate the convergence of the projection methods and hence the overall procedure of the level set
scheme. For the IMRT optimization problems the perturbed projection methods found an approximate solution up to 4 times faster than the unperturbed methods while at the same time achieving objective function values which were 0.5 to 5.1\% lower.

\keywords{Projection methods \and subgradient \and bounded perturbation resilient \and inertial methods \and surrogate constraint method}
\subclass{65K10 \and 65K15 \and 90C25 \and 90C90}
\end{abstract}

\section{Introduction}\label{sec:Intro}


The problem we consider in this paper is the \textit{general constrained convex optimization problem}
\begin{align}\label{eq:GeneralOptProblem}
\text{Minimize } &f(x)\\
\text{s.t. }&g_j(x) \leq 0 \quad  j=1,...,m \nonumber
\end{align}
where $f:\mathbb{R}^{n}\rightarrow\mathbb{R}$ and $g_{j}:\mathbb{R}^{n}\rightarrow\mathbb{R}$ for $j\in J = \{1,\ldots,m\}$ are convex functions.

In order to solve this problem we translate it to an equivalent epigraph form \eqref{eq:CFPepigraph} and then use a procedure called the \textit{level set scheme} introduced in \cite{gkrs18} to construct a sequence of convex feasibility problems from it.
In this procedure, we define an upper bound on the function $f$ and thereby generate an additional constraint. This constraint together with the constraints given in \eqref{eq:GeneralOptProblem} formulates a feasibility problem. Each time the feasibility problem can be solved by a suitable algorithmic operator $\boldsymbol{T}$, we reduce the upper bound on $f$ and use it to formulate the next feasibility problem.

The solution $x^{\ast}_{\varepsilon}$ produced by the level set scheme is an approximation of the solution $x^{\ast}$ of the original optimization problem \eqref{eq:GeneralOptProblem} in the sense that $f(x^{\ast}_{\varepsilon}) < f(x^{\ast}) + \varepsilon_s$ for some $\varepsilon_s > 0$, while at the same time both $x^{\ast}_{\varepsilon}$ and $x^{\ast}$ fulfill $g_j(x)\leq 0$ for all $j\in J$. $x^{\ast}_{\varepsilon}$ is therefore called an $\varepsilon_s$-optimal solution of \eqref{eq:GeneralOptProblem}.


In this work we choose the \textit{simultaneous subgradient projection} method and the \textit{cyclic subgradient projection} method as options for the operator $T$ used to solve each feasibility problem and compare their behavior. Both methods, as well as many others suitable to solve a convex feasibility problem, are bounded perturbation resilient according to Definition \ref{eq:DefBPR} \cite{special-issue}. This means that certain changes can be made at each iteration step and $T$ still produces a solution of the feasibility problem. One favorable property of bounded perturbation resilient algorithms is the fact that the convergence rate of the unperturbed variant is preserved by the perturbed algorithm (see e.g. \cite[Theorem 11 and Corollary 14]{brz17}).


There exist two kinds of perturbations, which differ in the way the changes are applied. They are called inner and outer perturbations. Combettes in \cite{combettes01} studied the concept of perturbation resilience of projection methods using inner perturbations, which they and others refer to as convergence under summable errors \cite{tw13}.
Algorithms using outer perturbations are also referred to as inexact methods \cite{sv12}, because the perturbation vectors can be interpreted e.g. as calculation error if the evaluation of $\boldsymbol{T}(x)$ cannot be performed in an exact way. These kind of perturbations are used e.g. in \cite{dgj18}.


One way to exploit bounded perturbation resilience is implemented by the \textit{superiorization methodology}, which uses perturbations pro-actively during the performance of the iteration scheme to steer the algorithm to an output that still solves the given problem, but is superior with respect to a given secondary criterion. See e.g. \cite{censor18,censor15,censor17,cdh10,cdhst14,cz15}.
This approach has been used in  the same application as the one we consider in this paper: Intensity-modulated radiation therapy (IMRT) treatment planning \cite{bgks17,gkrs18}.
Gibali et al. in \cite{gkrs18} presented a detailed comparison of superiorized and unsuperiorized projection methods used in the level set scheme to solve optimization problems in the field of IMRT. In \cite{bgks17} these optimization problems are reformulated as lexicographic optimization problems. The perturbations used in a given level of the lexicographic optimization scheme are chosen to be descent directions with respect to the objective function of the subsequent optimization level. This approach exhibits faster convergence than the unperturbed algorithm.


Our motivation for using perturbations, however, is a different one. When we solve IMRT optimization problems as described previously, we observe zigzagging behavior of the projection methods. This behavior is not surprising as we attempt to satisfy two main goals, which conflict with each other: Irradiating the tumor and at the same time sparing surrounding healthy organs. The zigzagging behavior is similar to the one known to occur in projected gradient methods, which arises from the conflict between seeking feasibility and reduction of the objective function, see \cite{wang2008} and the references therein.

In this paper we introduce the \textit{heavy ball} and the \textit{surrogate constraint} perturbation, which address the zigzagging behavior. We offer formulations of them both as inner and outer perturbations. We show that all of these formulations describe bounded perturbations. Finally, we use the heavy ball and surrogate constraint perturbation together with both the cyclic and simultaneous subgradient projection method.


The heavy ball perturbation uses the approach of the heavy ball method, which belongs to the group of $k$-step or inertial methods. These algorithms use gradient information from $k$ previous iteration steps to calculate their next iterate. Incorporating previous gradient information alleviates zigzagging behavior compared to methods, which only use current gradient information.
In recent years authors demonstrated that many optimization algorithms of this kind can be seen as a discretization of the trajectories of differential equations derived from the field of continuous dynamical systems, e.g., \cite{acpr:18,frv:18,Nesterov:83,Polyak:64}. It is shown that inertial approaches are very effective and demonstrate good convergence properties.


The second perturbation we discuss is the surrogate constraint perturbation. It is inspired by the algorithm presented in \cite{Dudek2007}. This algorithm combines the approach of classical surrogate constraint methods (see e.g. \cite{Yang1992}) with relaxed, averaged projections. The author considers linear feasibility problems and demonstrates that especially for flat solution sets (which are associated with zigzagging behavior by methods using gradient information), their algorithm converges faster than a classical surrogate constraint method. In our work we illustrate that their idea can be successfully translated to the nonlinear context of IMRT optimization problems.


The paper is organized as follows. In Section \ref{sec:Prelim} we present definitions of methods and concepts, which we will use in our algorithm. In Section \ref{sec:Pert} we present some results on the convergence of algorithmic operators using inner and outer perturbations. Then we introduce the perturbations developed by us and show that they are bounded. We end this section by summarizing our algorithm, which incorporates all of the concepts described before. Finally, in Section \ref{sec:Num} we present numerical results from both linear feasibility problems and nonlinear optimization problems arising from IMRT treatment planning.

\section{Preliminaries}\label{sec:Prelim}
We now present concepts and methods as well as mathematical background which is essential for the introduction of our results.

\medskip

In this paper we consider the general constrained convex optimization problem \eqref{eq:GeneralOptProblem}. We reformulate it into an equivalent \textit{epigraph form}

\begin{align}\label{eq:CFPepigraph}
\text{Minimize } t &\in\mathbb{R}\\
\text{s.t. }f(x)-t&\leq 0\nonumber\\
g_j(x) &\leq 0 \quad j=1,...,m.\nonumber
\end{align}

We denote the optimal value of \eqref{eq:CFPepigraph} by $t^{\ast}$. We assume that $t^{\ast}$ is finite and that there exists a point $x^{\ast}$ with $f(x^{\ast}) = t^{\ast}$, which is feasible for \eqref{eq:CFPepigraph}.

Our approach to solving \eqref{eq:CFPepigraph} is to transform it into a sequence of \textit{convex feasibility problems (CFP)}, see also \cite[Subsection 2.1.2]{Bertsekas99}. The general formulation of a CFP is the following:

\begin{align}\label{eq:DefCFP}
\text{Find } x\in C \colonequals \bigcap_{i\in I}C_{i} \colonequals \left\{  x\in\mathbb{R}^{n}\mid \varphi_{i}(x)\leq 0\right\}
\end{align}
where the functions $\varphi_i$ are convex for all $i\in I$.

In the following we describe the procedure we use to solve \eqref{eq:CFPepigraph}, which is called the \textit{level set scheme} \cite{gkrs18}. We use a decreasing sequence $\left\{t_{s}\right\}_{s=1}^{\infty}$.
This sequence is not known a priori. The first element $t_1 = \infty$ is fixed and every subsequent element of the sequence is determined later during the iteration process. We define the functions

\begin{align}
\varphi_1^s &\colonequals  f(x)-t_{s}, \varphi_{j+1}^s \colonequals g_j(x) \quad j=1,...,m.
\end{align}

Then we try to solve the CFP $\boldsymbol{P}^s$:

\begin{align} \label{Problem:k_CFP}
\text{Find } x \in C = \bigcap_{i\in I}C_{i}^s \colonequals \left\{  x\in\mathbb{R}^{n}\mid \varphi_{i}^s(x)\leq 0\right\}
\end{align}
with $I = \{1,..., m+1\}$. If $C\neq \emptyset$, a solution of $\boldsymbol{P}^s$ found by a suitable algorithmic operator $\boldsymbol{T}$ is denoted by $x^{\ast}_s$. Now $t_{s+1}$ is calculated according to a user defined update rule like for example $t_{s+1}=f(x^{\ast}_s)-\varepsilon_{s}$ or $t_{s+1}=f(x^{\ast}_s)(1-\varepsilon_{s})$ where $\{\varepsilon_{s}\}$ is some user chosen sequence with $\varepsilon_{s} > 0$ for all $s\geq 0$. Using $t_{s+1}$ the next CFP $\boldsymbol{P}^{s+1}$ can be formulated and the iteration proceeds.

If $C = \emptyset$ or the algorithmic operator $\boldsymbol{T}$ is unable to find a solution of  $\boldsymbol{P}^s$ for other reasons, we consider $x^{\ast}_{s-1}$ to be the result of the level set scheme.

One successful class of algorithmic operators for solving CFPs is the class of projection methods. These are iterative algorithms that use projections onto sets, relying on the principle that when a family of sets is present, then projections onto the given individual sets are easier to perform than projections onto other sets (intersections, image sets under some transformation, etc.) that are derived from the given individual sets. Projection methods can have various algorithmic structures some of which are
particularly suitable for parallel computing, and possess desirable convergence properties. See e.g. \cite{cz97,combettes97,znly18}.
A special case of a CFP is the linear feasibility problem $Ax = b$. An illustration of different projection methods for this problem can be found in \cite{ceh2000}.

In this paper we present two different projection methods as options for the algorithmic operator $\boldsymbol{T}$. The first one is the cyclic subgradient projection method, which is also used in combination with the level set scheme in \cite{gkrs18}. Like some other projection methods, it utilizes the concept of control sequences. These are sequences $\{i(\nu)\}_{\nu=0}^\infty$, which determine an ordering of the indices of the sets $C_{i}$ mentioned in \eqref{eq:DefCFP}. The kind of control sequence used by the cyclic subgradient projection method is defined below.

\begin{definition}\label{eq:control}
	The sequence $\{i(\nu)\}_{\nu=0}^\infty$ is called a \texttt{cyclic control sequence}, if $i(\nu)=\left(\nu \operatorname{mod} n\right)+1 $, where $n$ is the number of sets in \eqref{eq:DefCFP}.
\end{definition}

The \textit{cyclic subgradient projection} method can be written as algorithmic operator $\boldsymbol{T}$ in the following way:

Let $x^0\in\mathbb{R}^n$ be an arbitrary starting point. Given the current iterate $x^{k}$, the next iterate $x^{k+1}$ can be calculated via

\begin{align}\label{eq:CPMiterDef}
x^{k+1} &= \boldsymbol{T}(x^k) = x^{k} + \lambda _{k} p(x^k)
\end{align}
with

\begin{align}\label{eq:DefCPMStep}
p(x^k)= &-\frac{\max\{0, \varphi_{i(k)}(x^{k})\}}{\left\| \xi^{k}\right\| ^{2}
}\xi^{k}
\end{align}
where $\xi^{k}\in\partial \varphi_{i(k)}(x^{k})$ (subgradient of $\varphi_{i(k)}$ at $x^{k}$
) is arbitrary, $\lambda _{k}\in \lbrack \epsilon _{1},2-\epsilon _{2}]$ (relaxation parameters) for arbitrary $\epsilon _{1},\epsilon _{2}\in (0, 1]$ and $\{  i(k)\}$ is a cyclic control sequence.

\medskip

The second option we present for the algorithmic operator $\boldsymbol{T}$ is the \textit{simultaneous subgradient projection} method. We can write it in terms of $\boldsymbol{T}$ as follows:

Let $x^0\in\mathbb{R}^n$ be an arbitrary starting point. Given the current iterate $x^{k}$, the next iterate $x^{k+1}$ can be calculated via

\begin{align}\label{eq:SPMiterDef}
x^{k+1} &= \boldsymbol{T}(x^k) = x^{k} + \lambda _{k} p(x^k)
\end{align}
with

\begin{align}\label{eq:DefSPMStep}
p(x^k) &\colonequals -\sum_{i\in I}w_{i}\frac{\max\{0, \varphi_{i}(x^{k})\}}{\left\| \xi ^{k}\right\| ^{2}}\xi ^{k}
\end{align}
where $\xi ^{k}\in \partial \varphi_{i}(x^{k})$ (subgradient of $\varphi_{i}$ at $x^{k}$) is arbitrary, $w_{i}>0$ are weights with $\sum_{i\in I}w_i=1$ and $\lambda _{k}\in \lbrack \epsilon _{1},2-\epsilon _{2}]$ (relaxation parameters) for arbitrary $\epsilon _{1},\epsilon _{2}\in(0, 1]$.

\medskip

In the following, we use the notation $\bar{p}(x^k) \colonequals p(x^k)/\|p(x^k)\|$, refer to the simultaneous subgradient projection method simply as simultaneous projection and to the cyclic subgradient projection method as cyclic projection.

The convergence proof of the level set scheme \cite[Theorem 3.6]{gkrs18} relies on using a finite convergence method. We can transform both the cyclic and the simultaneous projection to fulfill this property using the approach of \cite{pi1988}. Note that \cite[Theorem 3.6]{gkrs18}, in contrast to convergence proofs for other finite convergent projection methods, e.g. \cite{ccp11}, is not based on the assumption that the Slater Condition holds. In our context this means that we do not rely on the existence of $x\in\mathbb{R}^n$ with $\varphi_{i}(x) < 0$ for all $i\in I$ for the level set scheme to converge.
\medskip

Perturbations can be applied to any algorithmic operator $\boldsymbol{T}$ which generates a sequence $\{x^k\}_{k=0}^{\infty}$ via $x^{k+1} = \boldsymbol{T}(x^k)$.
A central concept in the context of perturbations is \textit{bounded perturbation resilience}. This concept is defined in \cite{bdhk07} in a general way, i.e. for a general mathematical problem $\boldsymbol{P}$ and an algorithmic operator $\boldsymbol{T}$, which is suited to solve $\boldsymbol{P}$. \\

In \cite{dgj18,dgjt17} the concept of bounded perturbation resilience is considered with the problem $\boldsymbol{P}$ being the variational inequality problem. The operators $\boldsymbol{T}$ used to solve $\boldsymbol{P}$ in \cite{dgj18,dgjt17} are the extragradient method, the subgradient extragradient method and the projection and contraction method. In \cite{gh14,jcj13} the problem is the bioluminescence imaging problem, which can be phrased as a constrained optimization problem, and the operator used to solve it is the expectation maximization method. For a detailed overview and more examples see \cite{censor18}.

In this paper, the problem $\boldsymbol{P}$ is a CFP and $\boldsymbol{T}$ can be read as one of the projection methods we presented before.

\begin{definition}\label{eq:DefBPR}
	Given a problem $\boldsymbol{P}$, an algorithmic operator $\boldsymbol{T}$ and a starting point $x^0$ such that the sequence $\{x^{k}\}_{k=0}^{\infty},$ generated by $x^{k+1}=\boldsymbol{T}(x^{k})$ converges to a solution of $\boldsymbol{P}$. Then $\boldsymbol{T}$ is called \textit{bounded perturbation resilient} if any sequence $\{y^{k}\}_{k=0}^{\infty}$ with $y^0 = x^0$ generated using either \textit{inner perturbations} via
	
	\begin{align}
	y^{k+1} &= \boldsymbol{T}(y^k + \beta_k v^k)\qquad \forall k\geq 0
	\end{align}
	 or  using \textit{outer perturbations} via
	
	 \begin{align}
	 y^{k+1} &= \boldsymbol{T}(y^k) + \beta_k v^k\qquad \forall k\geq 0
	 \end{align}
	   where $\beta_{k}v^{k}$ are bounded perturbations (i.e. $\beta_k\in\mathbb{R}_{\geq 0}$ for all  $k\geq 0$, ${\displaystyle\sum\limits_{k=0}^{\infty}}\beta_{k}\,<\infty$, $v^k\in\mathbb{R}^n$ and $\|v^k\|\leq M\in\mathbb{R}$ for all $k\geq 0$) also converges to a solution of $\boldsymbol{P}$.
\end{definition}

Both the cyclic and the simultaneous projection method are known to be bounded perturbation resilient \cite{special-issue}.

\section{Perturbation of the simultaneous and cyclic subgradient projection method\label{sec:Pert}}

In this section we first present results on the convergence of sequences, which were generated using inner or outer perturbations. In Subsections \ref{sec:HBPerturbation} and \ref{sec:SCPerturbation} we introduce two specific perturbations developed by us, show how they can be formulated both as inner and outer perturbation and that they are bounded in the sense of Definition \ref{eq:DefBPR}. Finally, we summarize how we combined the tools presented in this section and Section \ref{sec:Prelim} in our algorithm.

\subsection{The relation of inner and outer perturbations}

In this section we explore the relation of iteration sequences resulting from methods using inner and outer perturbations concerning their convergence behavior.

First, we are extending Proposition 5 from \cite{CensorReem2014} to include the simultaneous subgradient projection method \eqref{eq:SPMiterDef} as algorithmic operator $\boldsymbol{T}$.

Let

\begin{align}
\tilde{b}^k &\colonequals \begin{cases}b^k & \text{if }c(x^k) = \text{ true}\\ 0 & \text{otherwise} \end{cases}
\end{align}
be perturbation vectors where $c:\mathbb{R}^n \rightarrow \{$false, true$\}$ is a function that gives a condition, which determines whether or not to perturb the current iterate.

Consider the sequences $\{y^k\}_{k=0}^{\infty}$ using outer perturbations and $\{z^k\}_{k=0}^{\infty}$ using inner perturbations, which are defined as follows:

\begin{align}\label{eq:DefOuterPert}
y^{k+1} & = \boldsymbol{T}(y^k) + \tilde{b}^k
\end{align}
and

\begin{align}\label{eq:DefInnerPert}
\begin{cases}
z^0 = \boldsymbol{T}(x^0)\\
z^{k+1} = \boldsymbol{T}(z^k + \tilde{b}^k)\end{cases}
\end{align}

\begin{proposition}\label{thm:prop5}
	Suppose that $\{b^k\}_{k=0}^{\infty}$ is a sequence in $\mathbb{R}^n$  satisfying $\lim_{k\rightarrow\infty} b^k \rightarrow 0$. If $\{y^k\}_{k=0}^{\infty}$ as defined in \eqref{eq:DefOuterPert} converges weakly to some $y^{\ast}$, then also $\{z^k\}_{k=0}^{\infty}$ as defined in \eqref{eq:DefInnerPert} converges weakly to $y^{\ast}$ and vice versa. If $\{y^k\}_{k=0}^{\infty}$ converges strongly, then $\{z^k\}_{k=0}^{\infty}$ converges strongly to the same limit and vice versa.
\end{proposition}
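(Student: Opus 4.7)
The plan is to avoid any analysis of the operator $\boldsymbol{T}$ and instead establish a purely algebraic identification between the two sequences, from which the equivalence of their convergence behavior follows immediately. Specifically, I would show by induction that
\begin{equation}
y^{k+1} = z^{k} + \tilde{b}^{k} \qquad \text{for all } k \geq 0,
\end{equation}
under the natural convention (inherited from Definition \ref{eq:DefBPR}) that $y^{0} = x^{0}$. Once this identity is in hand, the proposition reduces to the elementary observation that the two sequences differ by a vanishing shift.

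For the base case, the recurrence \eqref{eq:DefOuterPert} gives $y^{1} = \boldsymbol{T}(y^{0}) + \tilde{b}^{0} = \boldsymbol{T}(x^{0}) + \tilde{b}^{0} = z^{0} + \tilde{b}^{0}$ by the definition of $z^{0}$ in \eqref{eq:DefInnerPert}. For the inductive step, assuming $y^{k+1} = z^{k} + \tilde{b}^{k}$, we compute
\begin{equation}
y^{k+2} = \boldsymbol{T}(y^{k+1}) + \tilde{b}^{k+1} = \boldsymbol{T}(z^{k} + \tilde{b}^{k}) + \tilde{b}^{k+1} = z^{k+1} + \tilde{b}^{k+1},
\end{equation}
where the last equality uses \eqref{eq:DefInnerPert}. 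This closes the induction.

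From the definition of $\tilde{b}^{k}$ and the hypothesis $b^{k} \to 0$ we have $\|\tilde{b}^{k}\| \leq \|b^{k}\| \to 0$, so the identity above implies $\|y^{k+1} - z^{k}\| \to 0$. Weak (respectively strong) convergence is preserved under a vanishing additive perturbation, hence $\{y^{k}\}$ converges weakly/strongly to $y^{\ast}$ if and only if $\{z^{k}\}$ does, and the limits coincide. The main potential pitfall is purely bookkeeping: the index shift between the two sequences (coming from the fact that $z^{0}$ is already one application of $\boldsymbol{T}$ ahead of $y^{0}$) must be tracked carefully, but no nonexpansiveness, continuity, or other structural property of $\boldsymbol{T}$ is invoked. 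This is precisely why the argument applies verbatim to the simultaneous subgradient projection method \eqref{eq:SPMiterDef}, extending \cite[Proposition 5]{CensorReem2014} at essentially no additional cost.
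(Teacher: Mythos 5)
Your proposal is correct and follows essentially the same route as the paper: both establish the identity $y^{k+1}=z^{k}+\tilde{b}^{k}$ by induction and then conclude from the fact that the two sequences differ by a vanishing shift. The only difference is that you spell out this last step explicitly, whereas the paper defers it to the proof of Proposition 5 in \cite{CensorReem2014}.
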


\begin{proof}
	We show that by induction $y^{k+1} = z^k + \tilde{b}^k$ for all $k\in\mathbb{N}\cup \{0\}$. The rest of the statement follows from the proof of Proposition 5 in \cite{CensorReem2014}.
	
	\paragraph{k=0}
	Show that $y^1 = z^0 + \tilde{b}^0$.
	\begin{align*}
	z^0 + \tilde{b}^0 &= \boldsymbol{T}(y^0) + \tilde{b}^0 =  y^1& \text{according to } \eqref{eq:DefInnerPert} \text{ and }\eqref{eq:DefOuterPert}\\
	\end{align*}
	
	\paragraph{Induction step}
	Show that $y^k = z^{k-1} + \tilde{b}^{k-1} \Rightarrow y^{k+1} = z^k + \tilde{b}^k$.
	
	\begin{align*}
	z^k + \tilde{b}^k &= \boldsymbol{T}(z^{k-1} + \tilde{b}^{k-1}) + \tilde{b}^k & \text{according to } \eqref{eq:DefInnerPert}\\
	&= \boldsymbol{T}(y^k)+ \tilde{b}^k = y^{k+1}& \text{according to } \eqref{eq:DefOuterPert}
	\end{align*}
\end{proof}

Proposition \ref{thm:prop5} implies that inner and outer perturbations of the simultaneous projection method using the same perturbation vectors $\tilde{b}^k$ as described before can be used interchangeably with regard to weak and strong convergence.

In Sections \ref{sec:HBPerturbation} and \ref{sec:SCPerturbation} we present two perturbations developed by us and formulate them both as inner and outer perturbation. In contrast to \eqref{eq:DefOuterPert} and \eqref{eq:DefInnerPert}, we do not use the same perturbation vectors $\tilde{b}^k$ for both formulations there.
Instead, we formulate them according to the following schemes. We assume specific choices of $\lambda_k, \beta_k$, which will be explained in more detail, and present a result regarding the convergence of the sequence of iterates produced by methods using these perturbations.

Let again $\boldsymbol{T}$ be an algorithmic operator with

\begin{align}\label{eq:GeneralDefOurPerturbedOperator}
\boldsymbol{T}(x) = x + \lambda(x) p(x)
\end{align}
where $p(x)$ is either the simultaneous \eqref{eq:DefSPMStep} or the cyclic projection step \eqref{eq:DefCPMStep}. Note that $\lambda:\mathbb{R}^n\rightarrow [\epsilon_{1}, 2-\epsilon_{2})$ with arbitrary $\epsilon_{1},\epsilon_{2}\in(0,1] $ is a function here.

Let $c:\mathbb{R}^n\rightarrow \{$false, true$\}$ and $\tilde{c}:\mathbb{R}^n \times \mathbb{R}^n \rightarrow \{$false, true$\}$ with

\begin{align}\label{eq:DefTildePertCondition}
\tilde{c}(y^k, y^{k-1}) &\colonequals \neg c(y^{k-1}) \wedge c(y^k).
\end{align}
be functions, which determine, whether perturbation is applied in a certain iteration. $\tilde{c}(y^k, y^{k-1})$ is by construction only true, if $ c(y^{k-1})$ is false and $c(y^k)$ is true.

Let $\{y^k\}_{k=0}^{\infty}$ be the sequence of iterates produced by the outer perturbation scheme, which is defined as follows.

\begin{align}\label{eq:OuterPerturbationScheme}
y^{k+1} &= \boldsymbol{T}(y^k) + \beta_{k} v^k
\end{align}
where $\{\beta_k\}_{k=0}^{\infty}$ is a sequence with $\beta_k\in\mathbb{R}_{\geq 0}$ for all  $k\geq 0$ and ${\displaystyle\sum\limits_{k=0}^{\infty}}\beta_{k}\,<\infty$,

\begin{align}\label{eq:DefOuterPertVectors}
v^k &= \begin{cases} b^k - \lambda(y^k) p(y^k) &\text{if }\tilde{c}(y^k, y^{k-1})= \text{ true}\\ 0 & \text{otherwise}
\end{cases}
\end{align}
and the sequence $\{b^k\}_{k=0}^{\infty}$ is bounded. Furthermore assume that $\|p(y^k)\| < q \in\mathbb{R}$ for all $k\geq 0$.

Let $\{z^k\}_{k=0}^{\infty}$ be the sequence of iterates produced by the inner perturbation scheme, which is defined as follows.

\begin{align}\label{eq:InnerPerturbationScheme}
z^{k+1} &= \boldsymbol{T}(z^k + \beta_{k} v^k)
\end{align}

where $\{\beta_k\}_{k=0}^{\infty}$ is a sequence with $\beta_k\in\mathbb{R}_{\geq 0}$ for all  $k\geq 0$ and ${\displaystyle\sum\limits_{k=0}^{\infty}}\beta_{k}\,<\infty$,

\begin{align}\label{eq:DefInnerPertVectors}
v^k &= \begin{cases} b^k &\text{if }c(z^k) = \text{ true}\\ 0 & \text{otherwise}
\end{cases}
\end{align}
and the sequence $\{b^k\}_{k=0}^{\infty}$ is bounded.

Now we choose

\begin{align}\label{eq:BinaryBetas}
\beta_k &= \begin{cases}1 & k\leq K\\ 0 & k> K\end{cases}
\end{align}
for some $K\in\mathbb{N}$. Choosing $\beta_k$ like this means that for $k>K$ no more perturbations will be applied. In our computations we choose $K$ as the maximum number of iterations allowed for the projection method to solve the current CFP.

\begin{lemma}\label{lemma:sequenceComparison}
	If $y^0 = z^0$ and $\{\beta_k\}_{k=0}^{\infty}$ is chosen as in \eqref{eq:BinaryBetas}, the following statements are true.
	\begin{itemize}
		\item[(a)] If  for some $l<K$ it holds that $c(z^k) = $ false for all $k = 0,..., l-1$  and  $c(z^{l}) = $ true, we have $z^k = y^k$ for all $k = 0,..., l$.
		\item[(b)] $\{z^k\}_{k=0}^{\infty}\subseteq \{y^k\}_{k=0}^{\infty}$.
		\item[(c)] For all $z^k\in \{z^k\}_{k=0}^{\infty}$ exists $N\in\mathbb{N}_0,  0\leq N\leq K$ such that $z^k = y^{k+N}$.
	\end{itemize}
\end{lemma}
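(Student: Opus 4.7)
The plan is to prove (a) by a direct induction, obtain (c) by a strengthened induction tracking an offset counter $N_k$, and deduce (b) as an immediate corollary of (c). The unifying idea is that the outer scheme delivers a ``two-step'' rendition of each triggered inner perturbation: on a rising edge of $c$, the cancellation built into \eqref{eq:DefOuterPertVectors} makes the outer sequence first jump by $b^{k+N_k}$ and only on the following step apply $\boldsymbol{T}$, whereas the inner scheme encapsulates both operations in a single step $z^{k+1} = \boldsymbol{T}(z^k + b^k)$.

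For (a) I induct on $j \in \{0, 1, \ldots, l\}$. The base case $z^0 = y^0$ is given. For the step I use that $j < l$ forces $c(z^j) = $ false, so by \eqref{eq:DefInnerPertVectors} the inner perturbation vanishes and $z^{j+1} = \boldsymbol{T}(z^j)$. Since the inductive hypothesis gives $z^j = y^j$ and $z^{j-1} = y^{j-1}$ (the latter vacuous for $j = 0$), the equality $c(y^j) = c(z^j) = $ false together with \eqref{eq:DefTildePertCondition} yields $\tilde{c}(y^j, y^{j-1}) = $ false, so by \eqref{eq:DefOuterPertVectors} the outer perturbation also vanishes, giving $y^{j+1} = \boldsymbol{T}(y^j) = z^{j+1}$.

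For (c) I induct on $k$ with the strengthened hypothesis $z^k = y^{k+N_k}$, where $N_k$ counts the number of perturbation events triggered in the inner sequence up to step $k$. I split the step from $k$ to $k+1$ into two cases. If $c(z^k) = $ false or $\beta_k = 0$, the inner step is $z^{k+1} = \boldsymbol{T}(z^k)$; from $c(y^{k+N_k}) = c(z^k) = $ false and \eqref{eq:DefTildePertCondition} the condition $\tilde{c}$ vanishes at index $k+N_k$, giving $y^{k+N_k + 1} = \boldsymbol{T}(y^{k+N_k}) = z^{k+1}$ and $N_{k+1} = N_k$. If instead $c(z^k) = $ true and $\beta_k = 1$, the induction will have maintained that a rising edge occurs in the outer sequence at index $k + N_k$, so $\tilde{c}(y^{k+N_k}, y^{k+N_k - 1}) = $ true and the cancellation in \eqref{eq:DefOuterPertVectors} gives $y^{k+N_k + 1} = y^{k+N_k} + b^{k+N_k}$; then $c(y^{k+N_k}) = $ true makes $\tilde{c}(y^{k+N_k + 1}, y^{k+N_k}) = $ false by \eqref{eq:DefTildePertCondition}, so $y^{k+N_k + 2} = \boldsymbol{T}(y^{k+N_k + 1}) = \boldsymbol{T}(z^k + b^k) = z^{k+1}$ and $N_{k+1} = N_k + 1$. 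The bound $N_k \leq K$ follows because $N$ can only increment when $\beta_k = 1$, which by \eqref{eq:BinaryBetas} happens for at most $K+1$ indices. Statement (b) is then immediate from (c), since each $z^k$ is identified with the outer iterate $y^{k+N_k}$.

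The main obstacle I anticipate is verifying that the rising-edge structure of $c$ in the outer sequence is preserved throughout the induction, in particular ensuring $c(y^{k+N_k - 1}) = $ false at the moment the inner scheme triggers at step $k$. This requires tracking how $c$ evolves after an outer perturbation and relies on interpreting the perturbation vectors $b^k$ as state-dependent, so that matched iterates in the two sequences produce matched perturbations. Both properties are built into the concrete perturbations of Subsections \ref{sec:HBPerturbation} and \ref{sec:SCPerturbation}, but need to be flagged as implicit assumptions in the general argument.
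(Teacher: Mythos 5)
Your proof is correct and follows essentially the same route as the paper's: the outer scheme realizes each triggered inner perturbation as a jump step $y \mapsto y + b$ followed by an application of $\boldsymbol{T}$ on the next step, so the two sequences agree up to an offset $N$ that increments by one at each perturbation event. The implicit assumptions you flag at the end --- that the perturbation vectors $b^k$ must be understood as state-dependent so that matched iterates produce matched perturbations, and that the rising-edge condition $c(y^{k+N_k-1}) = \text{false}$ must be verified when the inner scheme triggers --- are likewise left tacit in the paper's own proof, which treats only the first perturbation event in detail and then asserts that ``the same argument as before holds,'' so your more explicit bookkeeping is, if anything, a sharpening rather than a departure.
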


\begin{proof}
	Statement (a) follows directly from equations \eqref{eq:DefInnerPertVectors} and \eqref{eq:DefOuterPertVectors}.
	
	Now suppose, that $l$ with $0\leq l < K$ is the first iteration index where the condition for applying perturbations is fulfilled. This means that $c(z^{l}) = \tilde{c}(y^{l}, y^{l-1}) = $ true  and $c(z^k) = \tilde{c}(y^k, y^{k-1}) = $ false for all $k<l$. We know from (a) that then $z^k = y^k$ for all $k = 0,..., l$.
	
	We have $\beta_l = 1$ because $l<K$. The iteration schemes give us
	
	\begin{align*}
	z^{l+1} &= \boldsymbol{T}(z^l + \beta_l b^l) = \boldsymbol{T}(y^l + b^l)
	\end{align*}	
	and
	
	\begin{align*}
	y^{l+1} &= \boldsymbol{T}(y^l) + \beta_l (b^l - \lambda(y^l) p(y^l))\\
	&=  y^l + (1-\beta_l) \lambda(y^l) p(y^l) + \beta_l b^l = y^l + b^l.
	\end{align*}
	
	$c(y^l)=$ true, so $\tilde{c}(y^{l+1}, y^l) = $ false, no matter what $c(y^{l+1})$ is. The following iterate is therefore an unperturbed one.
	
	\begin{align*}
	y^{l+2} &= \boldsymbol{T}(y^{l+1}) =  \boldsymbol{T}(y^l + b^l) = z^{l+1}
	\end{align*}
	
	Because $\lambda$ depends on the iterate (and not on the iteration index) and $y^{l+2} = z^{l+1}$, we get
	
	\begin{align*}
	z^{l+2} &= \boldsymbol{T}(z^{l+1}) = z^{l+1} + \lambda(z^{l+1}) p(z^{l+1})\\
	&= y^{l+2} + \lambda(y^{l+2}) p(y^{l+2}) = \boldsymbol{T}(y^{l+2}) = y^{l+3}
	\end{align*}	
	assuming that $c(z^{l+1})=$ false. Otherwise, the same argument as before holds.
	Statement (b) follows from this. Before any perturbations are applied, the number $N$ from statement (c) is 0. Each time perturbations are applied in an iteration, $N$  increases by 1.
	
\end{proof}

\begin{proposition}\label{thm:SequenceConvergenceComparison}
	Suppose that $\{\beta_k\}_{k=0}^{\infty}$ is chosen as in \eqref{eq:BinaryBetas} and $\{b^k\}_{k=0}^{\infty}$ is a bounded sequence in $\mathbb{R}^n$. If $\{y^k\}_{k=0}^{\infty}$ converges weakly to some $y^{\ast}$, then also $\{z^k\}_{k=0}^{\infty}$ converges weakly to $y^{\ast}$ and vice versa. If $\{y^k\}_{k=0}^{\infty}$ converges strongly, then $\{z^k\}_{k=0}^{\infty}$ converges strongly to the same limit and vice versa.
\end{proposition}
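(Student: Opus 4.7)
The plan is to leverage Lemma \ref{lemma:sequenceComparison}(c) to identify $\{z^k\}_{k=0}^{\infty}$ with an index-shifted tail of $\{y^k\}_{k=0}^{\infty}$, reducing the proposition to the fact that weak and strong convergence are insensitive to a finite shift of indices.

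First, I would invoke Lemma \ref{lemma:sequenceComparison}(c) to produce, for each $k\geq 0$, an integer $N_k \in \{0,1,\ldots,K\}$ such that $z^k = y^{k+N_k}$. Next, I would establish that $N_k$ is non-decreasing in $k$: the proof of the lemma shows that $N_k$ can grow by exactly one each time a perturbation fires on the $z$-side, and stays the same otherwise; moreover, a $z$-perturbation requires $\beta_k = 1$, i.e.\ $k \leq K$. Hence $N_k$ is a non-decreasing, integer-valued, bounded sequence, and there is a constant $N^{\ast} \in \{0,\ldots,K\}$ with $N_k = N^{\ast}$ for every $k \geq K$. This yields the central identity
\begin{equation*}
z^k = y^{k+N^{\ast}} \qquad \text{for every } k \geq K.
\end{equation*}

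From this identity the conclusion is immediate. If $\{y^k\}_{k=0}^{\infty}$ converges weakly (respectively strongly) to some $y^{\ast}$, then so does any tail shift $\{y^{k+N^{\ast}}\}_{k \geq K}$, and hence $\{z^k\}_{k=0}^{\infty}$ converges in the same mode to $y^{\ast}$. Conversely, rewriting the identity as $y^m = z^{m-N^{\ast}}$ for $m \geq K + N^{\ast}$ transfers convergence of $\{z^k\}_{k=0}^{\infty}$ back to $\{y^k\}_{k=0}^{\infty}$ in either mode.

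The main obstacle I anticipate is the bookkeeping required to verify rigorously that $N_k$ is monotone non-decreasing and stabilizes at $N^{\ast}$. While this is implicit in the proof of Lemma \ref{lemma:sequenceComparison}, a fully rigorous derivation calls for an induction with a case split on whether the $z$-perturbation condition $c(z^k)$ holds, together with a careful appeal to the definition \eqref{eq:DefTildePertCondition} of $\tilde{c}$ to exclude a $y$-perturbation on the iteration immediately following a previous one, which is what allows the $y$-sequence to catch up by exactly one step. Once this monotonicity is in place, stabilization of $N_k$ for $k \geq K$ follows from the choice $\beta_k = 0$ for $k > K$, and the remainder of the argument is routine.
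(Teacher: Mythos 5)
Your proposal is correct and follows essentially the same route as the paper: the paper's proof likewise consists of invoking Lemma \ref{lemma:sequenceComparison}(c) to conclude that the shift stabilizes at some constant $N$ for all sufficiently large $k$, so that $z^k = y^{k+N}$ eventually and convergence (weak or strong) transfers in both directions. Your write-up merely makes explicit the monotonicity and stabilization of $N_k$ that the paper leaves implicit.
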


\begin{proof}
	It follows from Lemma \ref{lemma:sequenceComparison}(c) that there exists $0\leq N\leq K$ such that $z^k = y^{k+N}$ for all $k=l,... \infty$ for some $l\in\mathbb{N}_0$.
\end{proof}

Proposition \ref{thm:SequenceConvergenceComparison} implies that inner and outer perturbations of both the simultaneous and cyclic projection method using the perturbation vectors \eqref{eq:DefInnerPertVectors} or \eqref{eq:DefOuterPertVectors} can be used interchangeably with regard to weak and strong convergence.

\subsection{The heavy ball perturbation}\label{sec:HBPerturbation}

An early example of inertial-type methods (also known as $k$-step methods) is the \textit{heavy ball} method of \cite{Polyak:64}. Inertial-type methods are a time-discretization of an ordinary differential equation defining a continuous-time dynamical system (in general are easier to understand than their discrete-time counterparts, see \cite{Borwein:2018aa}). These methods incorporate gradient information from the last $k$ iterates into the calculation of the iteration step towards the next iterate and can be used to avoid zigzagging behavior. It is shown that methods using such inertial terms progress converge faster than methods using only current gradient information.
For a deeper discussion of this matter, see e.g. \cite{acpr:18,Nesterov:83} as well as \cite{dgj18,dgjt17} and the many references therein.

Our approach is to use such terms as perturbations for algorithmic operators $\boldsymbol{T}$ as defined in \eqref{eq:GeneralDefOurPerturbedOperator} and show that such interference can accelerate the convergence of the algorithm.

\subsubsection{Formulation as inner perturbation}

The iteration scheme using the heavy ball perturbation as inner perturbation is the following:

\begin{align}
x^{k+1} &= \boldsymbol{T}(x^k + \beta_{k} v^k)
\end{align}
where

\begin{align}\label{eq:HBinnerPerturbationDef}
v^k &= \begin{cases}
\lambda^{HB}_k d^{HB} & \text{if }c(x^k) = \text{ true}\\
0 & \text{otherwise}
\end{cases}
\end{align}

$\{\beta_{k}\}$ is a sequence with $\beta_k\in\mathbb{R}_{\geq 0}$ for all $k\geq 0$ and $\sum_{k=1}^{\infty} \beta_k < \infty$, $\{\lambda^{HB}_k\}$ is a bounded user-chosen sequence of step lengths and $d^{HB} = \bar{p}(x^{k-1}) + \bar{p}(x^k)$, where $p(x)$ is either \eqref{eq:DefCPMStep} or \eqref{eq:DefSPMStep}. The perturbation direction $d^{HB}$ is chosen like this, similar to iteration rules of $k$-step methods, in order to make opposing parts of $\bar{p}(x^{k-1})$ and $\bar{p}(x^k)$ cancel each other, because those parts are provoking the zigzagging behavior. Instead we get a direction which contains what both vectors have in common.
The function $c$, which determines, whether perturbations are applied or not, is defined as follows.

\begin{align}\label{eq:DefPertCondition}
c(x^k) = \begin{cases}
\text{true } & \text{if }  \langle\bar{p}(x^{k-1}), \bar{p}(x^k)\rangle \in [-1 + \epsilon_{\min}, -1+\epsilon_{\max}]\\
\text{false } & \text{otherwise }
\end{cases}
\end{align}

Choosing $c$ like this means, that perturbations are applied, when $\langle\bar{p}(x^{k-1}), \bar{p}(x^k)\rangle$ is close to $-1$, which translates to $\bar{p}(x^{k-1})$ and $\bar{p}(x^k)$ pointing into almost opposite directions and therefore zigzagging behavior. If that is the case, the convergence of simultaneous and cyclic projection is slow and our aim is to accelerate it by using perturbations.

Recalling Definition \ref{eq:DefBPR}, we now show that the perturbations \eqref{eq:HBinnerPerturbationDef} are bounded.

\begin{lemma}
	Let  $c(x^k) =$ true. Choose a bounded sequence $\{\lambda^{HB}_k\}$ with $\lambda^{HB}_k \geq 0$ for all $k\geq 0$. Then, the perturbations $\beta_{k} v^k$ defined in \eqref{eq:HBinnerPerturbationDef} are bounded.
\end{lemma}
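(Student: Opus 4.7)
The plan is to verify the two requirements from Definition \ref{eq:DefBPR} for the perturbation $\beta_k v^k$: nonnegativity with summability of $\{\beta_k\}$, and a uniform bound on $\|v^k\|$. The first requirement is imposed directly in the construction of the inner perturbation scheme in \eqref{eq:HBinnerPerturbationDef} (it is stated that $\beta_k\in\mathbb{R}_{\geq 0}$ and $\sum_{k=0}^\infty\beta_k<\infty$), so nothing has to be proved there. The entire substance of the lemma is therefore the existence of a constant $M\in\mathbb{R}$ with $\|v^k\|\leq M$ for every $k\geq 0$.

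First I would dispense with the trivial case: if $c(x^k)=\text{false}$, then $v^k=0$ by \eqref{eq:HBinnerPerturbationDef} and there is nothing to check. Hence the only nontrivial iterations are those where $c(x^k)=\text{true}$, which is exactly the hypothesis of the lemma. In that case $v^k=\lambda^{HB}_k\,d^{HB}=\lambda^{HB}_k\bigl(\bar p(x^{k-1})+\bar p(x^k)\bigr)$. Applying the triangle inequality and using the defining property of the normalized step $\bar p(y)\colonequals p(y)/\|p(y)\|$, namely $\|\bar p(y)\|=1$, yields
\begin{equation*}
\|v^k\|\;\leq\;\lambda^{HB}_k\bigl(\|\bar p(x^{k-1})\|+\|\bar p(x^k)\|\bigr)\;=\;2\lambda^{HB}_k.
\end{equation*}
By assumption $\{\lambda^{HB}_k\}$ is bounded and nonnegative, so setting $\Lambda\colonequals\sup_{k}\lambda^{HB}_k<\infty$ gives the uniform bound $\|v^k\|\leq 2\Lambda=:M$, which completes the verification.

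There is essentially no difficulty here; the only mild subtlety is the well-definedness of $\bar p(x^k)$ when $p(x^k)$ vanishes. This situation, however, only occurs when $x^k$ already satisfies all constraint functions defining the current CFP, in which case zigzagging is not an issue and the condition $c(x^k)=\text{true}$ can be taken to fail; alternatively one may simply extend the definition by $\bar p(x)\colonequals 0$ on the zero set, preserving $\|\bar p(x)\|\leq 1$ and leaving the bound $2\Lambda$ intact. Taking $M=2\Lambda$ the conclusion of Definition \ref{eq:DefBPR} holds, so $\beta_k v^k$ is a bounded perturbation.
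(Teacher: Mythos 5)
Your proof is correct, and its overall structure matches the paper's: both reduce the claim to a uniform bound on $\|d^{HB}\| = \|\bar{p}(x^{k-1})+\bar{p}(x^k)\|$ and then absorb the bounded factor $\lambda^{HB}_k$. The one genuine difference is in the final estimate. You use the triangle inequality and $\|\bar p(\cdot)\|=1$ to get $\|d^{HB}\|\leq 2$, which does not use the hypothesis $c(x^k)=\text{true}$ at all. The paper instead expands the square, $\|d^{HB}\| = \sqrt{2+2\langle\bar{p}(x^{k-1}),\bar{p}(x^k)\rangle}$, and invokes the perturbation condition \eqref{eq:DefPertCondition} (which forces the inner product into $[-1+\epsilon_{\min},-1+\epsilon_{\max}]$) to obtain the sharper constant $\sqrt{2\epsilon_{\max}}$. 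For the purpose of this lemma, either constant suffices, so your argument is a valid and slightly more elementary proof; what the paper's version buys is the quantitative information that the perturbation direction is \emph{small} precisely when zigzagging is detected, which is consonant with the later geometric analysis of the perturbed step. Your remark about the well-definedness of $\bar p(x^k)$ when $p(x^k)=0$ is a legitimate point that the paper glosses over, and your proposed conventions handle it without affecting the bound.
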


\begin{proof}
	
	With $\{\beta_k\}, \{\lambda^{HB}_k\}$ chosen as described in the assumptions, it suffices to show that $\|d^{HB}\|$ is bounded.
	
	\begin{align*}
	\|d^{HB}\| &= \|\bar{p}(x^{k-1}) + \bar{p}(x^k)\|\\
	&= \sqrt{\|\bar{p}(x^{k-1})\|^2 + \|\bar{p}(x^k)\|^2 + 2\langle\bar{p}(x^{k-1}), \bar{p}(x^k)\rangle}\\
	&= \sqrt{2 + 2 (-1 + \epsilon)} \leq \sqrt{2 \epsilon_{\max}} =: M
	\end{align*}
	
\end{proof}

\subsubsection{Formulation as outer perturbation}

The iteration scheme using the heavy ball perturbation as outer perturbation is the following:

\begin{align}
x^{k+1} &= \boldsymbol{T}(x^k) + \beta_{k}v^k
\end{align}
where

\begin{align}\label{eq:HBouterPerturbationDef}
v^k &= \begin{cases}
\lambda^{HB}_k d^{HB} -  \lambda(x^k) p(x^k) & \text{if } \tilde{c}(x^k, x^{k-1}) = \text{true}\\
0 & \text{otherwise}
\end{cases}
\end{align}

$\{\beta_{k}\}$ is a sequence with $\beta_k\in\mathbb{R}_{\geq 0}$ for all $k\geq 0$ and $\sum_{k=1}^{\infty} \beta_k < \infty$, $\{\lambda^{HB}_k\}$ is a user-chosen bounded sequence of step lengths and $d^{HB} = \bar{p}(x^{k-1}) + \bar{p}(x^k)$, where $p(x)$ is either \eqref{eq:DefCPMStep} or \eqref{eq:DefSPMStep}. The function $\tilde{c}$ is defined as in \eqref{eq:DefTildePertCondition} and \eqref{eq:DefPertCondition}.

Recalling Definition \ref{eq:DefBPR}, we now show that the perturbations \eqref{eq:HBouterPerturbationDef} are bounded.

\begin{lemma}
	Assume, that  $\|p(x^k)\| \leq q \in\mathbb{R}$ for all $k\geq 0$ and $\{\lambda^{HB}_k\}$ is a bounded sequence which fulfills $\lambda^{HB}_k \geq 0$ for all $k\geq 0$.
	
	Then, the perturbations $\beta_{k} v^k$ defined in \eqref{eq:HBouterPerturbationDef} are bounded.
\end{lemma}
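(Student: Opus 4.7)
The plan is to mirror the proof of the previous lemma (for the inner-perturbation case) by reducing boundedness of $\beta_k v^k$ to a uniform bound $\|v^k\|\le M$, which together with the summability $\sum\beta_k<\infty$ gives what Definition \ref{eq:DefBPR} requires. Since $v^k=0$ when $\tilde c(x^k,x^{k-1})$ is false, it suffices to handle the case where $\tilde c(x^k,x^{k-1})$ is true.

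First I would observe that by the definition \eqref{eq:DefTildePertCondition}, $\tilde c(x^k,x^{k-1})=\text{true}$ forces $c(x^k)=\text{true}$. This is the key link that lets me re-use the computation from the inner-perturbation lemma: under $c(x^k)=\text{true}$ we have $\langle \bar p(x^{k-1}),\bar p(x^k)\rangle\in[-1+\epsilon_{\min},-1+\epsilon_{\max}]$, and the same expansion
\begin{align*}
\|d^{HB}\|^2 = \|\bar p(x^{k-1})\|^2+\|\bar p(x^k)\|^2+2\langle \bar p(x^{k-1}),\bar p(x^k)\rangle \le 2\epsilon_{\max}
\end{align*}
yields $\|d^{HB}\|\le\sqrt{2\epsilon_{\max}}$.

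Next I would bound the second term of $v^k$ by plugging in the standing hypotheses: $\lambda(x^k)<2-\epsilon_2$ from the definition \eqref{eq:GeneralDefOurPerturbedOperator} and the range of $\lambda$, and the assumed uniform bound $\|p(x^k)\|\le q$. Thus $\|\lambda(x^k)p(x^k)\|\le(2-\epsilon_2)q$. Combining with boundedness of $\{\lambda^{HB}_k\}$, say $\lambda^{HB}_k\le L$, the triangle inequality gives
\begin{align*}
\|v^k\|\le \lambda^{HB}_k\|d^{HB}\|+\lambda(x^k)\|p(x^k)\|\le L\sqrt{2\epsilon_{\max}}+(2-\epsilon_2)q =: M
\end{align*}
whenever a perturbation is applied, and $\|v^k\|=0\le M$ otherwise. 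Then $\|\beta_k v^k\|\le\beta_k M$, which together with $\beta_k\ge 0$ and $\sum_k\beta_k<\infty$ establishes boundedness in the sense required.

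I do not expect any genuine obstacle here; the proof is essentially a routine triangle-inequality argument. The only subtlety worth flagging explicitly is the implication $\tilde c(x^k,x^{k-1})\Rightarrow c(x^k)$, because without it one could not invoke the inner product bound that controls $\|d^{HB}\|$. Once that is noted, the rest is straightforward bookkeeping.
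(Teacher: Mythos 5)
Your proof is correct, and it reaches the same conclusion by a slightly more elementary route than the paper. The paper expands $\|v^k\|^2$ as a squared norm, computes the cross term $\langle d^{HB}, p(x^k)\rangle = \|p(x^k)\|\,\epsilon > 0$ using the perturbation condition a second time, and discards it because it enters with a favorable sign; you instead apply the plain triangle inequality, which needs the inner-product condition only once, to bound $\|d^{HB}\|\le\sqrt{2\epsilon_{\max}}$. Both arguments rely on the same two ingredients (the bound on $\|d^{HB}\|$ inherited from the inner-perturbation lemma and the hypotheses $\lambda(x^k)<2-\epsilon_2$, $\|p(x^k)\|\le q$), so the difference is cosmetic, but your version is cleaner in two respects: you make explicit the implication $\tilde c(x^k,x^{k-1})\Rightarrow c(x^k)$, which the paper leaves implicit and which is genuinely needed to invoke the bound on $\|d^{HB}\|$; and your final constant $M = L\sqrt{2\epsilon_{\max}}+(2-\epsilon_2)q$ is uniform in $k$, whereas the paper's stated bound $\bar M = (\lambda^{HB}_k)^2M^2+4q^2$ still carries the index $k$ and should strictly be replaced by a uniform bound on $\lambda^{HB}_k$ as you do.
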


\begin{proof}
	With $\{\beta_k\}$ chosen as described in the assumptions, it suffices to show that $\|v^k\| < \bar{M}\in\mathbb{R}$ for all $k\geq 0$. We have shown before, that $\|d^{HB}\|< M\in \mathbb{R}$.
	
	\begin{align*}
	\|v^k\|^2 &\leq \|\lambda^{HB}_k d^{HB} - \lambda(x^k) p(x^k)\|^2\\
	&= 	\|\lambda^{HB}_k d^{HB}\|^2 + \|\lambda(x^k) p(x^k)\|^2 - 2\langle \lambda^{HB}_k d^{HB}, \lambda(x^k) p(x^k)\rangle\\
	&\leq (\lambda^{HB}_k)^2 M^2+ \lambda(x^k)^2 q^2 - 2 \lambda^{HB}_k \lambda(x^k) \left(\langle \bar{p}(x^k), p(x^k)\rangle + \langle \bar{p}(x^{k-1}), p(x^k)\rangle\right)\\
	&=   (\lambda^{HB}_k)^2 M^2+ \lambda(x^k)^2 q^2 - 2 \lambda^{HB}_k \lambda(x^k) \left(\|p(x^k)\|(1 + -1 + \epsilon)\right)\\
	&=  (\lambda^{HB}_k)^2 M^2 + \lambda(x^k)^2 q^2 - \underbrace{2 \lambda^{HB}_k \lambda(x^k) \|p(x^k)\| \epsilon}_{>0}\\
	&<  (\lambda^{HB}_k)^2 M^2 +4 q^2  =: \bar{M}
	\end{align*}
\end{proof}

\subsection{The surrogate constraint perturbation}\label{sec:SCPerturbation}

The approach we used for the surrogate constraint perturbation originates from a special kind of \textit{surrogate constraint} method presented in  \cite{Dudek2007}. In this method, the author combines the traditional idea of surrogate constraint methods (see e.g. \cite{Yang1992}) with relaxed, averaged projections. This algorithm has the ability to prevent zigzagging behavior in solving a linear feasibility problem.

Again, we consider an algorithmic operator $\boldsymbol{T}$ as defined in \eqref{eq:GeneralDefOurPerturbedOperator}.

We modified the idea of \cite{Dudek2007} in such a manner that we did not project the individual gradients of the violated constraint functions onto the surrogate half-space $H^{SC}$. Instead, we calculated $p(x^k)$ as defined in \eqref{eq:SPMiterDef} and then projected it onto the half-space $H^{SC}$.

In Figure \ref{fig:SCMPerturbation} the vectors $\bar{p}(x^{k-1})$ and $p(x^{k})$ are depicted in red. We define the half-spaces

\begin{align}
H^{SC}\colonequals H^{k-1} &:= \{x\in\mathbb{R}^n :  \langle x-x^{k}, p(x^{k-1}) \rangle \geq 0\}\label{eq:DefSurrogateHalfspaceDudek}\\
H^{k} &:= \{x\in\mathbb{R}^n :  \langle x-(x^k+p(x^k)), p(x^{k}) \rangle \geq 0\}.
\end{align}

The vector 

\begin{align}\label{eq:DefSCpertDirection}
d^{SC} &\colonequals P_{H^{SC}}(x^k + p(x^k)) - x^{k}
\end{align}
is the vector pointing from $x^k$ to the metric projection of the point $x^k + p(x^k)$ onto the surrogate half-space $H^{SC}$. It is depicted in cyan in Figure \ref{fig:SCMPerturbation}. Projecting onto $H^{SC}$ eliminates the part of $p(x^k)$ which points into the direction of $-p(x^{k-1})$. This means that $d^{SC}\perp p(x^{k-1})$ and steering the iteration into the direction of $d^{SC}$ reduces zigzagging behavior.

By $A$ we denote the intersection point of $H^{k-1}$, $H^{k}$ and $span\{p(x^{k-1}), p(x^k))\}$. Let $\alpha := \cos^{-1}(1-\epsilon)$ be the angle between $-p(x^{k-1})$ and $p(x^{k})$ and $\beta \colonequals \pi/2 - \alpha$.
Let $B\colonequals P_{H^{SC}}(x^k + p(x^k))$ and  $C\colonequals x^k + p(x^k)$ and . The triangles $\Delta C B x^k  $ and  $\Delta C A x^{k}$ are similar, so it is true that

\begin{figure}
\center
	\includegraphics[height=9cm]{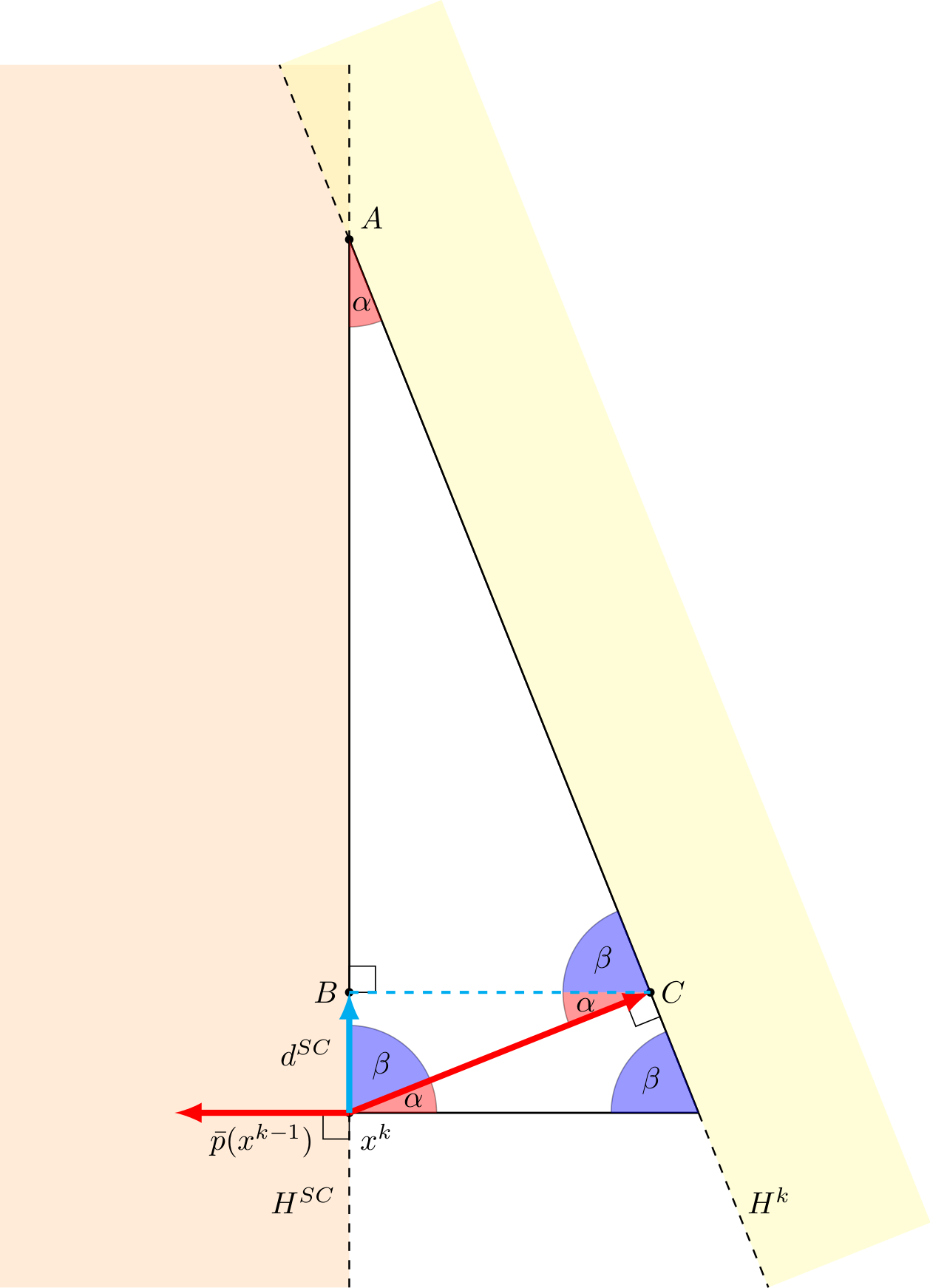}
	\caption{The direction $d^{SC}$ (depicted in cyan) of the surrogate constraint perturbation is calculated by projecting the point $x^k+p(x^{k})$ onto the surrogate half-space $H^{SC}$.}
	\label{fig:SCMPerturbation}
\end{figure}

\begin{equation*}
\frac{\|A - x^k\|}{\|p(x^{k})\|} = \frac{\|p(x^{k})\|}{\|d^{SC}\|}.
\end{equation*}

Choosing the step length

\begin{align}\label{eq:DefSCpertScalingFactor}
\lambda^{SC}_k = \|p(x^{k})\|^2 / \|d^{SC}\|^2
\end{align}
we get

\begin{equation*}
\frac{\|p(x^{k})\|}{\|d^{SC}\|} = \frac{\lambda^{SC}_k\|d^{SC}\|}{\|p(x^{k})\|}.
\end{equation*}

 These equations yield $\|A-x^k\| = \|\lambda^{SC}_k d^{SC}\|$. The construction of $d^{SC}$ implies that $d^{SC}/\|d^{SC}\| = (A-x^k)/\|A-x^k\|$ and therefore $A - x^k = \lambda^{SC}_k d^{SC}$. This means that the point $x^k + \lambda^{SC}_k d^{SC}$ is actually the same as the point $A$ depicted in Figure \ref{fig:SCMPerturbation}. Note that

\begin{align}
\sin(\alpha) &= \|d^{SC}\| / \|p(x^k)\|\label{eq:SCsinAlpha}\\
\Rightarrow \lambda^{SC}_k &=1/\sin(\alpha)^2\nonumber\\
\Rightarrow \|\lambda^{SC}_k d^{SC}\| &= \|p(x^k)\|/\sin(\alpha).\label{eq:SCpertStepLength}
\end{align}

As before using the heavy ball perturbation, we perturb the iteration sequence generated by $\boldsymbol{T}$ if $\langle\bar{p}(x^k), \bar{p}(x^{k-1})\rangle = -1+\epsilon$ for $\epsilon \in [\epsilon_{\min}, \epsilon_{\max}] \subset (0, 1)$.

\subsubsection{Formulation as inner perturbation}

The iteration scheme using the surrogate constraint perturbation as inner perturbation is

\begin{align}
x^{k+1} &= \boldsymbol{T}(x^k + \beta_k v^k)
\end{align}
where $\{\beta_k\}$ is a sequence with $\beta_k\in\mathbb{R}_{\geq 0}$ for all  $k\geq 0$ and ${\displaystyle\sum\limits_{k=0}^{\infty}}\beta_{k}\,<\infty$  and

\begin{align}\label{eq:SCinnerPertDef}
v^k &= \begin{cases}
\lambda^{SC}_k d^{SC} & \text{if } c(x^k) = \text{ true}\\
0 & \text{otherwise}
\end{cases}
\end{align}

Again, $c$ is defined as in \eqref{eq:DefPertCondition}.

As before in Section \ref{sec:HBPerturbation}, we now show that the perturbations \eqref{eq:SCinnerPertDef} are bounded in the sense of the definition \ref{eq:DefBPR} of bounded perturbation resilience.

\begin{lemma}
	Let $\langle\bar{p}(x^{k}), \bar{p}(x^{k-1})\rangle = -1+\epsilon$ for $\epsilon \in [\epsilon_{\min}, \epsilon_{\max}] \subset (0, 1)$. Further assume that $\|p(x^{k})\| \leq q \in\mathbb{R}$ holds for all $k\geq 0$.
	
	Then, the perturbations $\beta_{k} v^k$ as defined in \eqref{eq:SCinnerPertDef} are bounded.
\end{lemma}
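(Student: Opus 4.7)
The plan is to leverage the explicit geometric identity \eqref{eq:SCpertStepLength}, namely $\|\lambda^{SC}_k d^{SC}\| = \|p(x^k)\|/\sin(\alpha)$ with $\alpha = \cos^{-1}(1-\epsilon)$, which was derived just before the statement. Since the sequence $\{\beta_k\}$ is already summable by assumption in \eqref{eq:SCinnerPertDef}, it suffices to exhibit a single constant $M\in\mathbb{R}$ with $\|v^k\|\leq M$ for every $k\geq 0$.

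First I would split into two cases according to the value of the condition $c(x^k)$ from \eqref{eq:DefPertCondition}. If $c(x^k)$ is false, then $v^k = 0$ and the bound is trivial. Otherwise $v^k = \lambda^{SC}_k d^{SC}$, and by \eqref{eq:SCpertStepLength} we have
\begin{equation*}
\|v^k\| \;=\; \frac{\|p(x^k)\|}{\sin(\alpha)}.
\end{equation*}
The numerator is controlled by the standing assumption $\|p(x^k)\|\leq q$. For the denominator, the truth of $c(x^k)$ forces $\epsilon\in[\epsilon_{\min},\epsilon_{\max}]\subset(0,1)$, so $\cos(\alpha) = 1-\epsilon \leq 1-\epsilon_{\min}<1$, which yields the uniform lower bound
\begin{equation*}
\sin(\alpha) \;\geq\; \sqrt{1-(1-\epsilon_{\min})^2} \;=\; \sqrt{\epsilon_{\min}(2-\epsilon_{\min})} \;>\; 0.
\end{equation*}
Combining the two estimates, I would set $M := q/\sqrt{\epsilon_{\min}(2-\epsilon_{\min})}$ and conclude $\|v^k\|\leq M$ in both cases, which is exactly the boundedness requirement of Definition \ref{eq:DefBPR}.

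The argument is essentially a bookkeeping exercise once \eqref{eq:SCpertStepLength} is available; the only real subtlety is recognizing that the admissible range $\epsilon_{\min}>0$ is precisely what rules out the degenerate configuration in which $\bar{p}(x^k)$ and $-\bar{p}(x^{k-1})$ coincide so that $d^{SC}$ collapses to zero while $\lambda^{SC}_k = 1/\sin(\alpha)^2$ blows up. Because $\epsilon$ is bounded away from $0$, the angle $\alpha$ is bounded away from $0$ as well, so the ratio $\|p(x^k)\|/\sin(\alpha)$ stays finite uniformly in $k$.
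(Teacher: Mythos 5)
Your proposal is correct and follows essentially the same route as the paper: both reduce the claim to bounding $\|v^k\|$ via the identity \eqref{eq:SCpertStepLength}, then use $\sin(\alpha)=\sqrt{2\epsilon-\epsilon^{2}}\geq\sqrt{2\epsilon_{\min}-\epsilon_{\min}^{2}}$ together with $\|p(x^k)\|\leq q$ to obtain the identical constant $M=q/\sqrt{2\epsilon_{\min}-\epsilon_{\min}^{2}}$. The only difference is your explicit (and harmless) case split on $c(x^k)$, which the paper absorbs into the inequality $\|v^k\|\leq\|\lambda^{SC}d^{SC}\|$.
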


\begin{proof}
	With $\{\beta_k\}$ chosen as described in the assumptions, it suffices to show that $\|v^k\| \leq M\in\mathbb{R}$ for all $k\geq 0$. Recall that $\alpha = \cos^{-1}(1-\epsilon)$  as in Figure \ref{fig:SCMPerturbation}. We know from \eqref{eq:SCpertStepLength}
	\begin{align*}
	\|v^k\| \leq \|\lambda^{SC}d^{SC}\| &= \|p(x^k)\|/\sin(\alpha)\\
	&= \frac{\|p(x^k)\|}{\sqrt{2\epsilon - \epsilon^2}}\\
	&\leq  \frac{q}{\sqrt{2\epsilon_{\min} - \epsilon_{\min}^2}}  =: M
	\end{align*}
\end{proof}

\subsubsection{Formulation as outer perturbation}

The iteration scheme using the surrogate constraint perturbation as outer perturbation is

\begin{align}
x^{k+1} &= \boldsymbol{T}(x^{k}) + \beta_k v^k
\end{align}
where $\{\beta_k\}$ is a sequence with $\beta_k\in\mathbb{R}_{\geq 0}$ for all  $k\geq 0$ and ${\displaystyle \sum \limits_{k=0}^{\infty}}\beta_{k}\,<\infty$  and

\begin{align}\label{eq:SCouterPertDef}
v^k &= \begin{cases}
\lambda^{SC}_k d^{SC} -  \lambda(x^k) p(x^k) & \text{if } \tilde{c}(x^k, x^{k-1}) = \text{ true}\\
0 & \text{otherwise}
\end{cases}
\end{align}

The function $\tilde{c}$ is the same as in \eqref{eq:DefTildePertCondition} with $c$ as in \eqref{eq:DefPertCondition}.

As before in Section \ref{sec:HBPerturbation}, we now show that the perturbations \eqref{eq:SCouterPertDef} are bounded in the sense of the Definition \ref{eq:DefBPR}.

\begin{lemma}
	Let $\langle\bar{p}(x^{k-1}), \bar{p}(x^{k})\rangle = -1+\epsilon$ for $\epsilon \in [\epsilon_{\min}, \epsilon_{\max}] \subset (0, 1)$.
	Further assume that $\|p(x^{k})\| \leq q \in\mathbb{R}$ holds for all $k\geq 0$.
	
	Then, the perturbations $\beta_{k} v^k$ as defined in \eqref{eq:SCouterPertDef} are bounded.\\
\end{lemma}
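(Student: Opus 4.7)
The plan is to mirror the strategy already used for the outer heavy ball perturbation. Since $\beta_k \ge 0$ and $\sum \beta_k < \infty$, I only need to produce a uniform upper bound on $\|v^k\|$; when $\tilde c(x^k,x^{k-1})$ is false this is trivial, so I will focus on the case where the condition is true and $v^k = \lambda^{SC}_k d^{SC} - \lambda(x^k) p(x^k)$.

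The first step is to expand
\begin{equation*}
\|v^k\|^2 = \|\lambda^{SC}_k d^{SC}\|^2 + \lambda(x^k)^2\|p(x^k)\|^2 - 2\lambda^{SC}_k\lambda(x^k)\langle d^{SC}, p(x^k)\rangle.
\end{equation*}
Here the cross term is the crucial quantity. I will compute it from the explicit formula for $d^{SC}$ obtained by projecting $x^k + p(x^k)$ onto $H^{SC}$, namely
\begin{equation*}
d^{SC} = p(x^k) - \frac{\langle p(x^k), p(x^{k-1})\rangle}{\|p(x^{k-1})\|^2}\, p(x^{k-1}).
\end{equation*}
Using $\langle \bar p(x^k),\bar p(x^{k-1})\rangle = -1+\epsilon$ this gives $\langle d^{SC}, p(x^k)\rangle = \|p(x^k)\|^2(2\epsilon-\epsilon^2) = \|p(x^k)\|^2 \sin^2(\alpha)$, and combined with the definition $\lambda^{SC}_k = 1/\sin^2(\alpha)$ one obtains the clean identity $\lambda^{SC}_k\langle d^{SC}, p(x^k)\rangle = \|p(x^k)\|^2$.

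Substituting back yields
\begin{equation*}
\|v^k\|^2 = \|\lambda^{SC}_k d^{SC}\|^2 + \lambda(x^k)\bigl(\lambda(x^k)-2\bigr)\|p(x^k)\|^2,
\end{equation*}
and the second summand is nonpositive because $\lambda(x^k) \in [\epsilon_1, 2-\epsilon_2] \subset (0,2)$. Consequently $\|v^k\|^2 \le \|\lambda^{SC}_k d^{SC}\|^2$, and I will quote the bound already established in the inner perturbation lemma, $\|\lambda^{SC}_k d^{SC}\| \le q/\sqrt{2\epsilon_{\min}-\epsilon_{\min}^2}$, to set $\bar M := q/\sqrt{2\epsilon_{\min}-\epsilon_{\min}^2}$.

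The only real obstacle is deriving the identity $\lambda^{SC}_k \langle d^{SC}, p(x^k)\rangle = \|p(x^k)\|^2$; everything after that is routine. If one prefers to avoid the explicit projection computation, the same conclusion (with a slightly looser constant) can be reached by the triangle inequality $\|v^k\| \le \|\lambda^{SC}_k d^{SC}\| + \|\lambda(x^k) p(x^k)\| \le q/\sqrt{2\epsilon_{\min}-\epsilon_{\min}^2} + 2q$, using $\lambda(x^k) < 2$ and $\|p(x^k)\| \le q$; this gives a valid bound without recomputing the cross term.
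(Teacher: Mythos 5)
Your proof is correct and follows essentially the same strategy as the paper: expand $\|v^k\|^2$, control the cross term $\langle d^{SC}, p(x^k)\rangle$ via the known angle between $d^{SC}$ and $p(x^k)$, and reuse the bound $\|\lambda^{SC}_k d^{SC}\|\le M$ from the inner-perturbation lemma. The only difference is that you evaluate the cross term exactly (via the explicit projection formula, giving $\lambda^{SC}_k\langle d^{SC},p(x^k)\rangle=\|p(x^k)\|^2$ and hence the nonpositive term $\lambda(x^k)(\lambda(x^k)-2)\|p(x^k)\|^2$), whereas the paper only uses its sign and settles for the looser constant $\bar M=\sqrt{M^2+4q^2}$; your $\bar M=M$ is a tighter but otherwise equivalent conclusion.
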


\begin{proof}
	
	With $\{\beta_k\}$ chosen as described in the assumptions, it suffices to show that $\|v^k\| \leq \bar{M}\in\mathbb{R}$ for all $k\geq 0$. We have shown before that $\|\lambda^{SC}d^{SC}\| \leq M\in\mathbb{R}$.
	
	Let $\beta = \cos^{-1}(\langle\bar{p}(x^{k-1}), \bar{p}(x^{k})\rangle)$. We have $d^{SC}\in span \{p(x^{k-1}), p(x^{k})\}, d^{SC}\perp p(x^{k-1})$ and $1/\|d^{SC}\| \langle d^{SC}, \bar{p}(x^{k})\rangle>0$. Therefore, $\angle(d^{SC}, p(x^{k})) = \beta - \pi/2$, which lies in the open interval $(0, \pi/2)$.\\
	
	We also know that
	\begin{align*}
	\cos(\beta - \pi/2) &= \sin(\beta)\\
	&= \sqrt{1-\cos(\beta)^2}\\
	&= \sqrt{1-(-1+\epsilon)^2}\\
	&= \sqrt{2\epsilon - \epsilon^2}
	\end{align*}
	
	\begin{align*}
	\|v^k\|^2 &\leq \|\lambda^{SC}d^{SC} - \lambda(x^k) p(x^k)\|^2\\
	&= 	\|\lambda^{SC}d^{SC}\|^2 + \|\lambda(x^k) p(x^k)\|^2 - 2\langle\lambda^{SC}d^{SC}, \lambda(x^k) p(x^k)\rangle\\
	&\leq M^2 + \lambda(x^k)^2 q^2 - 2 \lambda(x^k) \lambda^{SC} \|p(x^k)\|  \|d^{SC}\|  \langle d^{SC}/\|d^{SC}\|, \bar{p}(x^k)\rangle \\
	&= M^2 + \lambda(x^k)^2 q^2 - \underbrace{2 \lambda(x^k) \lambda^{SC} \|p(x^k)\|  \|d^{SC}\| \sqrt{2\epsilon - \epsilon^2}}_{> 0}\\
	& <  M^2 + 4 q^2 =: \bar{M}
	\end{align*}
\end{proof}

\subsection{Convergence speed of the perturbed projection methods}

We have shown that the property of boundedness in the sense of Definition \ref{eq:DefBPR} is fulfilled by the heavy ball and the surrogate constraint perturbation, no matter whether they are formulated as inner or outer perturbation. Because both simultaneous and cyclic projection are bounded perturbation resilient, we can use the heavy ball and the surrogate constraint perturbation together with these methods and retain convergence to a solution of $\boldsymbol{P}^s$, if such a solution exists. \cite[Theorem 11 and Corollary 14]{brz17} states that we retain the convergence rate of the simultaneous or cyclic projection when we use them with bounded perturbations. In the Section \ref{sec:Num} we show numerical results, which indicate that in some cases using the heavy ball or the surrogate constraint perturbation can even speed up the convergence. In this section we will elaborate on theoretical arguments, which illustrate in which cases the perturbed simultaneous projection method converges faster than its unperturbed counterpart.\medskip

Statements about the convergence speed of both the simultaneous projection method and the method introduced by Dudek \cite{Dudek2007} exist in the literature. Cegielski \cite[Theorem 4.4.5]{cegielski13} states the following result about the convergence speed of the simultaneous projection method. It is phrased in terms of metric projections $P_{C_i}$ onto closed convex sets $C_i$.

\begin{theorem}
	If $z\in C\colonequals \bigcap_{i\in I} C_i, C \neq \emptyset, \lambda \in (0, 2]$ and
	
	\begin{align}
	x^{k+1} &= x^k + \lambda \sum_{i\in I} w_i(x^k) (P_{C_i}(x^k)-x^k)
	\end{align}	
	it holds that
	
	\begin{align}\label{eq:SPMConvergence_Cegielski}
	\|x^{k+1} - z\|^2&\leq \|x^k-z\|^2 - \lambda(2-\lambda) \sum w_i(x^k) \|P_{C_i}(x^k)-x^k\|^2.
	\end{align}
\end{theorem}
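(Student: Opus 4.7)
The plan is to rewrite the iterate as a convex combination of individually relaxed projections and then combine two standard facts: the convexity of $u\mapsto\|u-z\|^{2}$, and a Fej\'er-type estimate for each relaxed projection with respect to any point of $C_{i}$.

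For each $i\in I$, introduce the relaxed projection $T_{i}(u):=u+\lambda(P_{C_i}(u)-u)$. Since $\sum_{i\in I}w_{i}(x^{k})=1$, the iteration rewrites as $x^{k+1}=\sum_{i\in I}w_{i}(x^{k})T_{i}(x^{k})$, and Jensen's inequality applied to the convex function $\|\cdot-z\|^{2}$ yields
\begin{equation*}
\|x^{k+1}-z\|^{2}\leq \sum_{i\in I}w_{i}(x^{k})\|T_{i}(x^{k})-z\|^{2}.
\end{equation*}
It then suffices to establish the per-set estimate
\begin{equation*}
\|T_{i}(x^{k})-z\|^{2}\leq \|x^{k}-z\|^{2}-\lambda(2-\lambda)\|P_{C_i}(x^{k})-x^{k}\|^{2},
\end{equation*}
which is valid for any $z\in C\subseteq C_{i}$; averaging it with weights $w_{i}(x^{k})$ and inserting into the Jensen bound delivers \eqref{eq:SPMConvergence_Cegielski}.

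To prove the per-set estimate, I would expand $\|T_{i}(x^{k})-z\|^{2}$ by bilinearity, producing the cross term $2\lambda\langle P_{C_i}(x^{k})-x^{k},x^{k}-z\rangle$ and the quadratic term $\lambda^{2}\|P_{C_i}(x^{k})-x^{k}\|^{2}$. Splitting $x^{k}-z=(x^{k}-P_{C_i}(x^{k}))+(P_{C_i}(x^{k})-z)$, the cross term decomposes into $-2\lambda\|P_{C_i}(x^{k})-x^{k}\|^{2}$ plus $2\lambda\langle P_{C_i}(x^{k})-x^{k},P_{C_i}(x^{k})-z\rangle$. The latter inner product is nonpositive by the variational characterization of the metric projection onto a closed convex set. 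Since $\lambda\in(0,2]$ gives $\lambda(2-\lambda)\geq 0$, the desired estimate follows after collecting the coefficients of $\|P_{C_i}(x^{k})-x^{k}\|^{2}$.

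There is no serious obstacle in this argument: the only non-routine ingredient is the variational inequality $\langle P_{C_i}(x)-x,P_{C_i}(x)-z\rangle\leq 0$ for $z\in C_{i}$, which is the standard obtuse-angle characterization of projections onto closed convex sets and is precisely what makes the convexity hypothesis on the $C_{i}$ matter. Everything else is expansion of squared norms together with $\sum_{i\in I}w_{i}(x^{k})=1$ and Jensen's inequality.
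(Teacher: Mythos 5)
Your proof is correct. The paper does not reprove this result---it quotes it as Theorem 4.4.5 of Cegielski and only remarks that the essential ingredient is that the operators $P_{C_i}$ are cutters, i.e.\ $\langle P_{C_i}(x)-x,\,z-x\rangle \geq \|P_{C_i}(x)-x\|^2$ for all $z\in C_i$. Your argument is the standard one behind that citation: writing $x^{k+1}$ as a convex combination of the relaxed operators $T_i$, applying Jensen's inequality to $\|\cdot-z\|^2$, and establishing the per-set Fej\'er estimate. The obtuse-angle inequality $\langle P_{C_i}(x)-x,\,P_{C_i}(x)-z\rangle\leq 0$ that you invoke is exactly equivalent to the cutter inequality (add $\|P_{C_i}(x)-x\|^2$ to both sides after splitting $z-x$), so the two routes coincide. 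One small remark on generality: if you phrase the per-set estimate directly in terms of the cutter inequality rather than the variational characterization of the metric projection, the identical computation goes through for any cutter, which is precisely what the paper needs when it later transfers the bound to the subgradient projections $\tilde{P}_{C_i}$.
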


The essential property of the operators $P_{C_i}$ needed for the proof of this result is that they are cutters, i.e.

\begin{align}\label{eq:Def_Cutter}
\langle P_{C_i}(x)-x, z-x\rangle \geq \|P_{C_i}(x)-x\|^2 \quad \forall z\in C_i, x\in\mathbb{R}^n.
\end{align}

\cite[Corrolary 4.2.6]{cegielski13} states that not only metric projections but also subgradient projections are cutters.
Therefore \eqref{eq:SPMConvergence_Cegielski}  also holds for the subgradient projections we introduced in \eqref{eq:DefSPMStep}. To distinguish between metric and subgradient projections, we use the following notation:

\begin{align}\label{eq:SPMConvergence_DoseEvalFuncs}
\tilde{P}_{C_i}(x) - x &= - \frac{\max\{0, \varphi_{i}(x)\}}{\left\| \xi\right\| ^{2}}\xi
\end{align}

where $\xi$ is an arbitrary element of the subdifferential $\partial \varphi_{i}(x)$  of $\varphi_i$ at $x$.\medskip

Dudek in \cite{Dudek2007} states a result about the convergence of their method with a very similar structure to \eqref{eq:SPMConvergence_Cegielski}. Their method is phrased as an algorithm to solve a system of linear inequalities and the projections considered are metric projections. Nevertheless, the result also applies to subgradient projections, because the proof relies on the fact that the projections are cutters.

Dudeks result rephrased in terms of subgradient projection operators $\tilde{P}_{C_i}$ is the following:

\begin{theorem}\label{thm:DudeksConvergenceResForSubgradientProj}
	If $z\in C\colonequals \bigcap_{i\in I} C_i, , C \neq \emptyset, \lambda \in [0,2), H^{SC}$ defined as in \eqref{eq:DefSurrogateHalfspaceDudek} and
	
	\begin{align}
	x^{SC} &= x^k + \lambda \frac{\sum_{i\in I} w_i(x^k) \|s^{i}\|^2}{\left\|\sum_{i\in I} w_i(x^k) d^{i}\right\|^2}d
	\end{align}
	with
	\begin{align}
	s^{i}&=\tilde{P}_{C_i}(x^k)-x^k\\
	d^{i} &= P_{H^{SC}}(\tilde{P}_{C_i}(x^k)-x^k)\\
	d &= \sum_{i\in I} w_i(x^k) d^{i}
	\end{align}
	it holds that
	
	\begin{align}\label{eq:SCMConvergence_Dudek}
	\|x^{SC} - z\|^2&\leq \|x^k-z\|^2 - \lambda(2-\lambda) \sum w_i(x) \|s^{i}\|^2 \frac{\sum w_i(x) \left\|s^{i}\right\|^2}{\left\|\sum w_i(x) d^{i}\right\|^2}
	\end{align}
\end{theorem}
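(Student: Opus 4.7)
The plan is to follow Dudek's original proof in \cite{Dudek2007} essentially verbatim, exploiting the observation that the only property of the metric projection operators $P_{C_i}$ his argument uses is the cutter inequality \eqref{eq:Def_Cutter}. Since \cite[Corollary 4.2.6]{cegielski13} guarantees that the subgradient projections $\tilde{P}_{C_i}$ introduced in \eqref{eq:SPMConvergence_DoseEvalFuncs} are also cutters, replacing $P_{C_i}$ by $\tilde{P}_{C_i}$ throughout Dudek's argument preserves every step. The content of the proof is therefore a careful verification that no step in Dudek's chain of inequalities relies on metric-projection-specific properties beyond the cutter property.

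Concretely, setting $\mu \colonequals \sum_{i\in I} w_i(x^k)\|s^i\|^2 / \|\sum_{i\in I} w_i(x^k) d^i\|^2$, so that $x^{SC} = x^k + \lambda\mu d$ and $\mu\|d\|^2 = \sum_i w_i(x^k)\|s^i\|^2$, I would expand
\[
\|x^{SC} - z\|^2 = \|x^k-z\|^2 + 2\lambda\mu\langle d, x^k-z\rangle + \lambda^2\mu \sum_{i\in I} w_i(x^k)\|s^i\|^2.
\]
A short algebraic rearrangement shows that the desired bound \eqref{eq:SCMConvergence_Dudek} is equivalent to the single scalar inequality
\[
\langle d, z - x^k\rangle \;\geq\; \sum_{i\in I} w_i(x^k)\,\|s^i\|^2 \qquad (z\in C),
\]
from which the characteristic $\lambda(2-\lambda)$ factor drops out automatically.

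To establish this key inequality, I would first apply the cutter property \eqref{eq:Def_Cutter} to each $\tilde{P}_{C_i}$, obtaining $\langle s^i, z-x^k\rangle \geq \|s^i\|^2$ for all $i\in I$ and $z\in C$. Then, using the fact that the surrogate half-space $H^{SC}$ contains $C$ (by its construction \eqref{eq:DefSurrogateHalfspaceDudek} from the subgradient data at $x^{k-1}$, so that every point of $C$ satisfies the surrogate inequality), I would show that projecting $s^i$ onto $H^{SC}$ to obtain $d^i$ preserves, or even strengthens, the inner product bound, namely $\langle d^i, z - x^k\rangle \geq \|s^i\|^2$. Taking the convex combination $\sum_i w_i(x^k)(\cdot)$ of these inequalities then yields the required lower bound on $\langle d, z-x^k\rangle$. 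The main obstacle will be this last half-space transfer step: verifying $C \subseteq H^{SC}$ carefully from \eqref{eq:DefSurrogateHalfspaceDudek}, and correctly tracking signs and orientations so that the projection onto $H^{SC}$ genuinely preserves the cutter-type bound on $s^i$. Once this geometric point is pinned down, the remaining work is routine algebra and reproduces Dudek's conclusion with $\tilde{P}_{C_i}$ in place of $P_{C_i}$.
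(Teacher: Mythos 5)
Your proposal matches the paper's treatment: the paper offers no independent proof of Theorem \ref{thm:DudeksConvergenceResForSubgradientProj}, justifying it solely by the remark that Dudek's argument in \cite{Dudek2007} uses nothing about the projections beyond the cutter property \eqref{eq:Def_Cutter}, which the subgradient projections $\tilde{P}_{C_i}$ also satisfy by \cite[Corollary 4.2.6]{cegielski13}. Your reduction of \eqref{eq:SCMConvergence_Dudek} to the scalar inequality $\langle d, z-x^k\rangle \geq \sum_{i} w_i(x^k)\|s^i\|^2$ and the transfer of the cutter bound through $P_{H^{SC}}$ is a correct reconstruction of that argument and is in fact more explicit than what the paper provides.
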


Comparing \eqref{eq:SPMConvergence_Cegielski} and \eqref{eq:SCMConvergence_Dudek} we observe that the bound on the reduction $\|x^{k+1} - z\|^2 - \|x^{k} - z\|^2$ provided by the simultaneous projection and the bound on the reduction $\|x^{SC} - z\|^2 - \|x^k-z\|^2$ provided by Dudek's method differs only by the factor

\begin{align}
\delta(x^k)&\colonequals\frac{\sum w_i(x^k) \|\tilde{P}_{C_i}(x^k)-x^k\|^2}{\|\sum w_i(x^k) P_{H^{SC}}(\tilde{P}_{C_i}(x^k)-x^k)\|^2}
\end{align}

Because $P_{H^{SC}}$ is a metric projection onto a half-space we know that for all $k\geq 0$

\begin{align}
\left\|d^{i}\right\|^2 + \left\|(x^k + s^{i}) - P_{H^{SC}}(x^k + s^{i})\right\|^2 = \left\|s^{i}\right\|^2.
\end{align}

It follows that

\begin{align}
\left\|\sum w_i(x^k) d^{i}\right\|^2&\leq \sum w_i(x^k) \left\|d^{i}\right\|^2 \leq \sum w_i(x^k) \left\|s^{i}\right\|^2
\end{align}
and thus $\delta(x^k)\geq 1$ for all $k\geq 0$. The immediate consequence is that Dudek's method reduces the distance to any $z\in C$ at least as much as the simultaneous projection.

\subsubsection{Acceleration of the convergence by perturbations}
We consider the iteration step $k\rightarrow k+1$, which is perturbed using outer perturbations, i.e. we assume that $\tilde{c}(x^k, x^{k-1}) =$ true with the function $\tilde{c}$ as defined in \eqref{eq:DefTildePertCondition}.

\paragraph{\textbf{Surrogate constraint perturbation}}

\begin{lemma}
	\label{lemma:SCpertAndDudekAreTheSame}
	If $\langle p(x^{k-1}), \tilde{P}_{C_i}(x^k) - x^k \rangle \leq 0$ for all $i\in I$ then
	\begin{align}
	\sum_{i\in I} w_i(x^k) d^{i} = P_{H^{SC}}(p(x^k))
	\end{align}
\end{lemma}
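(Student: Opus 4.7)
The plan is to unwind the definition of $H^{SC}$ together with the closed form for the metric projection onto a half-space, apply it to each $d^i$ under the sign hypothesis, and then recognize that the weighted sum of the resulting expressions reassembles $p(x^k)$ in the first summand and an inner product with $p(x^k)$ in the second summand. The key identity I would rely on throughout is $p(x^k)=\sum_{i\in I} w_i(x^k)(\tilde{P}_{C_i}(x^k)-x^k)=\sum_{i\in I} w_i(x^k)s^i$, which is just the definition \eqref{eq:DefSPMStep} rewritten with the subgradient-projection operator $\tilde{P}_{C_i}$ from \eqref{eq:SPMConvergence_DoseEvalFuncs}.

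First, I would write out the half-space projection formula: for $y\in\mathbb{R}^n$ and $H^{SC}=\{x : \langle x-x^k, p(x^{k-1})\rangle\ge 0\}$,
\begin{equation*}
P_{H^{SC}}(y)=\begin{cases} y & \text{if } \langle y-x^k, p(x^{k-1})\rangle\ge 0,\\[2pt] y-\dfrac{\langle y-x^k, p(x^{k-1})\rangle}{\|p(x^{k-1})\|^2}\,p(x^{k-1}) & \text{otherwise.}\end{cases}
\end{equation*}
Setting $y=x^k+s^i$, the criterion reduces to the sign of $\langle s^i, p(x^{k-1})\rangle$. The hypothesis of the lemma ensures that this inner product is non-positive for every $i\in I$, so the second branch applies uniformly and, using the convention $d^i=P_{H^{SC}}(x^k+s^i)-x^k$ consistent with \eqref{eq:DefSCpertDirection}, I obtain $d^i = s^i - \dfrac{\langle s^i, p(x^{k-1})\rangle}{\|p(x^{k-1})\|^2}\,p(x^{k-1})$ for each $i$.

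Next I would take the weighted combination, pulling the sum inside the inner product by linearity:
\begin{equation*}
\sum_{i\in I} w_i(x^k)\, d^i \;=\; \sum_{i\in I} w_i(x^k)\, s^i \;-\; \frac{\bigl\langle \sum_{i\in I} w_i(x^k) s^i,\; p(x^{k-1})\bigr\rangle}{\|p(x^{k-1})\|^2}\, p(x^{k-1}).
\end{equation*}
Substituting the identity $\sum_{i\in I} w_i(x^k) s^i = p(x^k)$ on both occurrences gives the right-hand side of the target equality once I observe that the hypothesis also forces $\langle p(x^k), p(x^{k-1})\rangle = \sum_i w_i(x^k)\langle s^i, p(x^{k-1})\rangle \le 0$, so that the same ``non-trivial'' branch of the projection formula applies when computing $P_{H^{SC}}(p(x^k))$ (again interpreted, consistently with \eqref{eq:DefSCpertDirection}, as $P_{H^{SC}}(x^k+p(x^k))-x^k$).

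The computation itself is short and essentially algebraic; the only real subtlety I foresee is bookkeeping about where the projection $P_{H^{SC}}$ is being applied — to a point or to a displacement — and making sure the $x^k$-translation cancels consistently between the definitions of $d^i$ and of $P_{H^{SC}}(p(x^k))$. Once that convention is fixed in line with \eqref{eq:DefSCpertDirection}, the hypothesis exactly guarantees that both sides are evaluated in the ``active'' branch of the projection, so no case analysis is needed and the identity follows directly.
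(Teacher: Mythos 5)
Your proof is correct and follows essentially the same route as the paper's: both arguments amount to splitting each $s^i=\tilde{P}_{C_i}(x^k)-x^k$ into its component along $p(x^{k-1})$ and its orthogonal complement (the paper via the decomposition $v=\nu\langle\nu,v\rangle+u$ with $\nu=-\bar{p}(x^{k-1})$, you via the closed-form half-space projection formula), using the sign hypothesis to conclude that the projection discards exactly the normal component for every $i$ and for the weighted sum, and then invoking linearity together with $\sum_i w_i(x^k)s^i=p(x^k)$. Your explicit check that $\langle p(x^k),p(x^{k-1})\rangle\le 0$, so that the sum itself lies in the active branch, is a detail the paper leaves implicit, and your remark about the point-versus-displacement convention for $P_{H^{SC}}$ is a fair clarification of the paper's notation rather than a deviation from its argument.
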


\begin{proof}
	$-\bar{p}(x^{k-1})$ is the normal vector $\nu$ to the surrogate half-space $H^{SC}$ (with $\nu$ pointing away from $H^{SC}$).
	
	Let $v = \nu \langle \nu, v\rangle + u$ and $\langle u, \nu \rangle = 0$. From the definition of the metric projection it follows that
	
	\begin{align*}
	P_{H_{SC}} (v) &= u \Leftrightarrow \langle \nu, v\rangle\geq 0.
	\end{align*}
	
	Now let
	
	\begin{align*}
	v^{i} &\colonequals \tilde{P}_{C_i}(x^k)-x^k = \nu \langle \nu, v^{i}\rangle + u^{i}.
	\end{align*}
	
	The comparison of
	\begin{align*}
	\sum w_i(x^k) d^{i} &= \sum w_i(x^k) P_{H^{SC}}(\tilde{P}_{C_i}(x^k)-x^k)\\
	& = \sum w_i(x^k) u^{i}
	\end{align*}
	and
	\begin{align*}
	p(x^k) &= \sum w_i(x^k) v^{i}\\
	&= \sum w_i(x^k) (\nu \langle \nu, v^{i}\rangle + u^{i})\\
	&= \sum \nu  \langle \nu, w_i(x^k) v^{i}\rangle + w_i(x^k) u^{i}\\
	\Rightarrow P_{H^{SC}} (p(x^k)) &= \sum w_i(x^k) u^{i}
	\end{align*}	
	proves Lemma \ref{lemma:SCpertAndDudekAreTheSame}.
\end{proof}

Lemma \ref{lemma:SCpertAndDudekAreTheSame} implies that if $\langle p(x^{k-1}), \tilde{P}_{C_i}(x^k) - x^k \rangle \leq 0$ is fulfilled for all $i\in I$, then the surrogate constraint perturbation (with $\beta_k  =1$) and Dudek's method (with $\lambda  = 1$) actually result in the same iterate $x^{SC} \colonequals x^k +\lambda^{SC}_k d^{SC}$. This means that the statement of Theorem \ref{thm:DudeksConvergenceResForSubgradientProj} also applies for the next iterate resulting from the surrogate constraint perturbation. Note that the assumption $\langle p(x^{k-1}), \tilde{P}_{C_i}(x^k) - x^k \rangle \leq 0$ for all $i\in I$ is a reasonable assumption if there are two main groups of conflicting goals, each iteration step fulfills one group of goals, and the groups alternate in each iteration step.

Furthermore we can quantify the factor $\delta(x^k)$ in dependence of $\langle \bar{p}(x^k), \bar{p}(x^{k-1})\rangle$ under this assumption, as we will demonstrate in the following theorem. Let $\alpha  \colonequals \pi - \cos^{-1}(\langle \bar{p}(x^k), \bar{p}(x^{k-1})\rangle)$.

\begin{theorem}
	\label{thm:MySCpertConvergenceResult}
	If $\langle p(x^{k-1}), \tilde{P}_{C_i}(x^k) - x^k \rangle \leq 0$ for all $i\in I$ and
	
	\begin{align}
	x^{SC} = x^k + \lambda^{SC}_k d^{SC}
	\end{align}
	
	with $\lambda^{SC}_k$ and $d^{SC}$ as defined in \eqref{eq:DefSCpertDirection} and \eqref{eq:DefSCpertScalingFactor} then
	\begin{align}
	\|x^{SC}-z\|^2 \leq  \|x^k-z\|^2 -\frac{1}{\sin(\alpha)^2} \sum w_i(x^k) \|\tilde{P}_{C_i}(x^k)-x^k\|^2
	\end{align}
\end{theorem}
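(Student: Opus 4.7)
The plan is to chain Lemma \ref{lemma:SCpertAndDudekAreTheSame} with Theorem \ref{thm:DudeksConvergenceResForSubgradientProj} and then bridge the gap between Dudek's step length and our step length $\lambda^{SC}_k$ via Jensen's inequality. Under the hypothesis $\langle p(x^{k-1}), \tilde{P}_{C_i}(x^k)-x^k\rangle \leq 0$ for all $i\in I$, Lemma \ref{lemma:SCpertAndDudekAreTheSame} identifies $d = \sum_{i\in I}w_i(x^k)d^i$ with $d^{SC}$, so the only difference between Dudek's iterate $x^{SC}_D = x^k + \gamma_D d^{SC}$ (with $\gamma_D = \sum_i w_i(x^k)\|s^i\|^2/\|d^{SC}\|^2$) and ours $x^{SC} = x^k + \lambda^{SC}_k d^{SC}$ is the scalar coefficient along $d^{SC}$.

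First I would extract a linear inner-product bound on $\langle d^{SC}, z-x^k\rangle$ from Dudek's quadratic estimate. Expanding $\|x^{SC}_D-z\|^2$ and matching the result against Theorem \ref{thm:DudeksConvergenceResForSubgradientProj} at $\lambda = 1$, the quadratic terms cancel and leave the cutter-type inequality
\begin{equation*}
\langle d^{SC}, z-x^k\rangle \geq \sum_{i\in I} w_i(x^k) \|\tilde{P}_{C_i}(x^k)-x^k\|^2.
\end{equation*}
Applying the same expansion to $\|x^{SC}-z\|^2$ with our step $\lambda^{SC}_k = \|p(x^k)\|^2/\|d^{SC}\|^2$, substituting the above bound, and collecting terms produces
\begin{equation*}
\|x^{SC}-z\|^2 \leq \|x^k-z\|^2 - \frac{\|p(x^k)\|^2}{\|d^{SC}\|^2}\Bigl(2\sum_{i\in I} w_i(x^k)\|s^i\|^2 - \|p(x^k)\|^2\Bigr).
\end{equation*}
Jensen's inequality, invoked via $p(x^k) = \sum_i w_i(x^k) s^i$ with $w_i(x^k) \geq 0$ and $\sum_i w_i(x^k) = 1$, then yields $\|p(x^k)\|^2 \leq \sum_i w_i(x^k)\|s^i\|^2$, so the parenthesised expression is bounded below by $\sum_i w_i(x^k)\|s^i\|^2$. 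Rewriting $\|p(x^k)\|^2/\|d^{SC}\|^2 = 1/\sin(\alpha)^2$ by means of \eqref{eq:SCsinAlpha} finishes the proof.

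The delicate step is passing from Dudek's quadratic bound to the linear inequality on $\langle d^{SC}, z-x^k\rangle$. It works because Dudek's coefficient $\gamma_D$ is tuned precisely so that the quadratic contributions in $d^{SC}$ cancel on both sides of the estimate, exposing the underlying linear cutter-type bound. Once this bound is available, the remainder is a short algebraic computation combined with Jensen's inequality and the geometric identity \eqref{eq:SCsinAlpha}; note that our step $\lambda^{SC}_k$ is shorter than $\gamma_D$ (again by Jensen), so the quadratic "overshoot" term can indeed be absorbed, which is exactly what the argument exploits.
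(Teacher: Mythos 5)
Your proof is correct, and it takes a somewhat different route from the paper's. The paper's proof applies Theorem \ref{thm:DudeksConvergenceResForSubgradientProj} directly to $x^{SC}$, i.e.\ it treats $x^{SC}=x^k+\lambda^{SC}_k d^{SC}$ as if it were Dudek's iterate $x^k+\gamma_D d^{SC}$ with $\gamma_D=\sum_i w_i(x^k)\|s^i\|^2/\|d^{SC}\|^2$, then uses the Jensen bound $-\sum_i w_i(x^k)\|s^i\|^2\leq-\|p(x^k)\|^2$ only to replace the numerator of Dudek's contraction factor, and finishes with \eqref{eq:SCsinAlpha}. You instead peel Dudek's quadratic estimate back to the linear cutter-type inequality $\langle d^{SC},z-x^k\rangle\geq\sum_i w_i(x^k)\|s^i\|^2$ (the cancellation of the $\lambda^2$ terms you describe does check out), re-expand $\|x^{SC}-z\|^2$ with the actual coefficient $\lambda^{SC}_k=\|p(x^k)\|^2/\|d^{SC}\|^2$, and then invoke Jensen once to absorb the overshoot term $2\sum_i w_i(x^k)\|s^i\|^2-\|p(x^k)\|^2\geq\sum_i w_i(x^k)\|s^i\|^2$. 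What your version buys is rigor at exactly the point where the paper is imprecise: $\lambda^{SC}_k$ and $\gamma_D$ coincide only when Jensen holds with equality (all $s^i$ with positive weight equal), so the paper's claim that the surrogate constraint perturbation and Dudek's method ``result in the same iterate'' is not valid in general, and your decomposition repairs this by proving the bound for the step length actually used. What the paper's version buys is brevity, since it never has to expand the square explicitly. Both proofs end identically via \eqref{eq:SCsinAlpha}.
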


\begin{proof}
From Theorem \ref{thm:DudeksConvergenceResForSubgradientProj} we know
	
\begin{align*}
\|x^{SC}-z\|^2 \leq  \|x^k-z\|^2 - \sum w_i(x^k) \|\tilde{P}_{C_i}(x^k)-x^k\|^2 \frac{\sum w_i \|\tilde{P}_{C_i}(x^k)-x^k\|^2}{\|\sum w_i(x^k) d^{i}\|^2}.
\end{align*}	
The inequality
\begin{align*}
-\sum w_i(x^k) \|\tilde{P}_{C_i}(x^k)-x^k\|^2 \leq -\|\sum w_i(x^k) (\tilde{P}_{C_i}(x^k)-x^k)\|^2 = -\|p(x^k)\|^2,
\end{align*} 	
yields	
\begin{align*}
\|x^{SC}-z\|^2 \leq  \|x^k-z\|^2 - \sum w_i(x^k) \|\tilde{P}_{C_i}(x^k)-x^k\|^2 \left(\frac{\|p(x^k)\|}{\|d^{SC}\|}\right)^2.
\end{align*}
The result now follows from \eqref{eq:SCsinAlpha}.
\end{proof}

\paragraph{\textbf{Heavy ball perturbation}}

By $x^{HB} \colonequals x^k + 0.5 \lambda^{HB}(\bar{p}(x^k)+\bar{p}(x^{k-1}))$ we denote the next iterate generated by the perturbed iteration scheme using heavy ball perturbation. Let $\langle \bar{p}(x^k), \bar{p}(x^{k-1})\rangle = -1+\epsilon$.\\

\begin{figure}
	\centering
	\includegraphics[height=0.4\textheight]{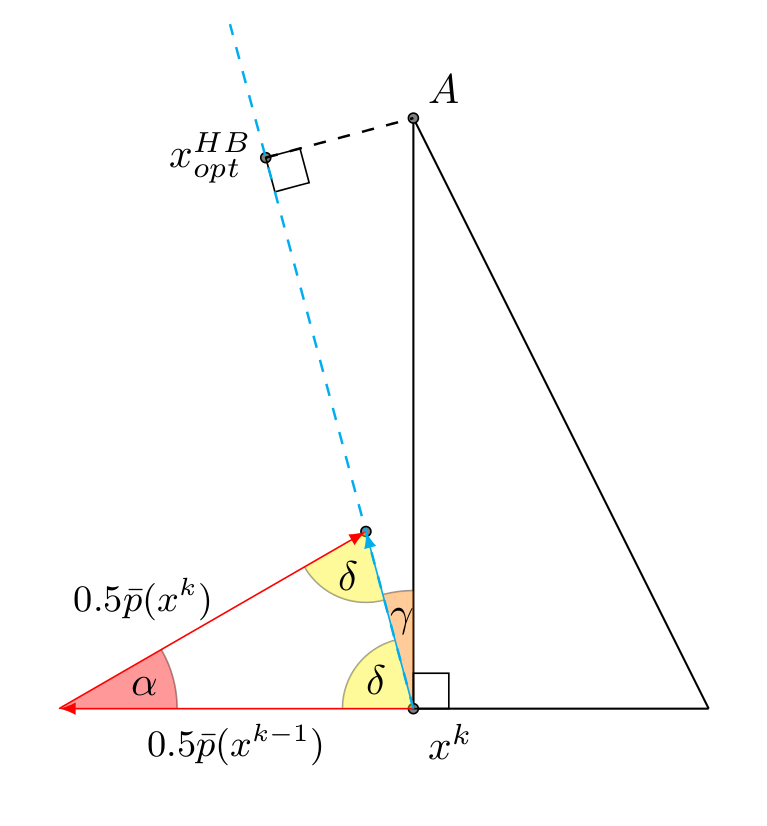}
	\caption{Illustration of the heavy ball perturbation}
	\label{fig:triangleHB}
\end{figure}

The triangle in Figure \ref{fig:triangleHB} is the same as in Figure \ref{fig:SCMPerturbation}. The Figure illustrates the progress towards the triangle tip $A$ made by the perturbed iteration scheme using heavy ball perturbation. The point $x^{HB}$ lies on the dashed cyan line, its exact position depending on the step length $\lambda^{HB}$.  Let $\gamma \colonequals\angle(A-x^k, x^{HB}-x^k)$. The triangle $x^k, x^k + 0.5\bar{p}(x^{k-1}), x^{HB}$ is isosceles, because $\|0.5\bar{p}(x^{k-1})\| = \|0.5\bar{p}(x^{k})\| = 0.5$. Therefore $\gamma = \pi/2 - (\pi-\alpha)/2 = \alpha/2$.

We have

\begin{align*}
\|x^{HB} - x^k\| &= 0.5\lambda^{HB} \|\bar{p}(x^k) + \bar{p}(x^{k-1})\|\\
&= 0.5\lambda^{HB} \sqrt{\|\bar{p}(x^k)\|^2 + \|\bar{p}(x^{k-1})\|^2 + 2\langle \bar{p}(x^k), \bar{p}(x^{k-1})\rangle}\\
&= \lambda^{HB} \sqrt{\frac{\epsilon}{2}}
\end{align*}

The distance $\|x^{HB} - A\|$ of course depends on the step size $\lambda^{HB}$. It attains its minimum if $\langle x^{HB} - A, x^{HB} - x^k\rangle = 0$, which occurs if

\begin{align}
\|x^{HB}_{opt} - x^k\| &= \cos(\alpha/2)\|x^{k}-A\|,
\end{align}
which means that

\begin{align}\label{eq:HBpertOptimalStepLength}
\lambda^{HB}_{opt} =  \cos(\alpha/2)\|x^{k}-A\| \sqrt{\frac{2}{\epsilon}}.
\end{align}

We get

\begin{align}\label{eq:HBpertOptimalDistToTriangleTip}
\|x^{HB}_{opt} - A\| &= \|x^k-A\|\sin(\alpha/2).
\end{align}

In Section \ref{sec:SCPerturbation} we have illustrated that $x^{SC}$ coincides with the triangle tip $A$.
Recall that $\alpha$ is tied to the parameters $\epsilon_{\min}, \epsilon_{\max}$ in \eqref{eq:DefPertCondition}, which are used to decide whether the perturbation is applied in a certain iteration $k$.

\begin{theorem}
	\label{thm:MyHBpertConvergenceResult}
	If $\langle p(x^{k-1}), \tilde{P}_{C_i}(x^k) - x^k \rangle \leq 0$ for all $i\in I$ then there exists $\tilde{\alpha}\in(0, \pi/3]$ such that if $\alpha = \pi-\cos^{-1}(\langle \bar{p}(x^k), \bar{p}(x^{k-1})\rangle) < \tilde{\alpha}$ is fulfilled and $\lambda^{HB}$ is chosen as in \eqref{eq:HBpertOptimalStepLength} the following statement holds:
	\begin{align}
	\|x^{HB}_{opt} -z\|^2 - \left(\|x^k-z\|^2 -\sum w_i \|\tilde{P}_{C_i}(x^k)-x^k\|^2 \right) < 0.
	\end{align}
\end{theorem}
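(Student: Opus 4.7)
My plan is to reduce the claim to a two-dimensional geometric computation in the affine plane $x^k+\operatorname{span}\{\bar{p}(x^{k-1}),\bar{p}(x^k)\}$, invoke Theorem \ref{thm:MySCpertConvergenceResult} to obtain a lower bound on one coordinate of the feasible point $z$, and use the cutter property (\cite[Corollary 4.2.6]{cegielski13}) at the previous iterate $x^{k-1}$ to obtain an upper bound on the other coordinate, closing the argument once $\alpha$ is sufficiently small.

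First I would note that $x^{HB}_{opt}-x^k\in\operatorname{span}\{\bar{p}(x^{k-1}),\bar{p}(x^k)\}$, so the component of $z-x^k$ orthogonal to this plane cancels out of $\|x^{HB}_{opt}-z\|^2-\|x^k-z\|^2$, and it suffices to treat $z-x^k$ as lying in the plane. Choosing orthonormal coordinates $(e_1,e_2)$ with $x^k=0$, $\bar{p}(x^{k-1})=-e_1$ and $\bar{p}(x^k)=\cos\alpha\,e_1+\sin\alpha\,e_2$, a direct computation using \eqref{eq:HBpertOptimalStepLength} together with the geometry from Figure \ref{fig:triangleHB} gives
\begin{equation*}
x^{HB}_{opt}=\Bigl(-\tfrac{\|p(x^k)\|}{2},\;\tfrac{\|p(x^k)\|}{2\tan(\alpha/2)}\Bigr),\qquad A=\Bigl(0,\;\tfrac{\|p(x^k)\|}{\sin\alpha}\Bigr).
\end{equation*}
Writing $z=(z_1,z_2)$ and expanding, the target inequality becomes
\begin{equation*}
\tfrac{\|p(x^k)\|^2}{4\sin^2(\alpha/2)}+\|p(x^k)\|\,z_1-\tfrac{\|p(x^k)\|}{\tan(\alpha/2)}\,z_2+S<0,
\end{equation*}
with $S\colonequals\sum_{i\in I}w_i\|\tilde{P}_{C_i}(x^k)-x^k\|^2$.

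To handle the $z_2$-term I would apply Theorem \ref{thm:MySCpertConvergenceResult} (applicable by Lemma \ref{lemma:SCpertAndDudekAreTheSame} and the standing hypothesis): rewriting $\|A-z\|^2\leq\|x^k-z\|^2-S/\sin^2\alpha$ in coordinates yields $z_2\geq(\|p(x^k)\|^2+S)/(2\sin\alpha\,\|p(x^k)\|)$. Substituting this bound into $-\|p(x^k)\|/\tan(\alpha/2)\cdot z_2$ and using $\sin\alpha\,\tan(\alpha/2)=2\sin^2(\alpha/2)$ collapses the $\|p(x^k)\|^2$-term, reducing the claim to the scalar inequality
\begin{equation*}
\|p(x^k)\|\,z_1<\tfrac{S\,(1-4\sin^2(\alpha/2))}{4\sin^2(\alpha/2)}.
\end{equation*}
For $\alpha<\pi/3$ the right-hand side is strictly positive and diverges to $+\infty$ as $\alpha\to 0^+$. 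To close the argument I would bound $z_1$ uniformly from above using the cutter property at $x^{k-1}$: since $\tilde{c}(x^k,x^{k-1})=$ true forces the step $x^{k-1}\to x^k$ to be unperturbed, $x^{k-1}$ lies on the positive $e_1$-axis at $(\lambda(x^{k-1})\|p(x^{k-1})\|,0)$, and the cutter inequality at $x^{k-1}$ gives $z_1\leq\lambda(x^{k-1})\|p(x^{k-1})\|-\|p(x^{k-1})\|$ (or its simultaneous-projection analogue with $S^{k-1}/\|p(x^{k-1})\|$). Picking $\tilde{\alpha}\in(0,\pi/3]$ so small that the right-hand side above dominates $\|p(x^k)\|$ times this fixed upper bound for every $\alpha<\tilde{\alpha}$ finishes the proof.

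The main obstacle is that a naive triangle inequality $\|x^{HB}_{opt}-z\|\leq\|x^{HB}_{opt}-A\|+\|A-z\|$ is too weak: combining \eqref{eq:HBpertOptimalDistToTriangleTip} with $\|x^k-A\|=\|p(x^k)\|/\sin\alpha$ gives $\|x^{HB}_{opt}-A\|=\|p(x^k)\|/(2\cos(\alpha/2))$, which is of order $\|p(x^k)\|$ independently of $\alpha$, so for large $\|x^k-z\|$ such a bound cannot beat $\|x^k-z\|^2-S$. This is why the proof has to exploit the cutter-type constraints on $z$ at both iterates $x^k$ and $x^{k-1}$, and why the threshold $\tilde{\alpha}$ can only be obtained existentially, rather than in closed form.
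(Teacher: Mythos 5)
Your proof is correct, and it takes a genuinely different route from the paper's. The paper bounds $\|x^{HB}_{opt}-z\|$ by the triangle inequality through $x^{SC}=A$, i.e.\ $\|x^{HB}_{opt}-z\|^2\leq\bigl(\|x^{SC}-z\|+\|x^{HB}_{opt}-x^{SC}\|\bigr)^2$, then shows that the resulting error terms $\|x^{HB}_{opt}-x^{SC}\|^2+2\|x^{SC}-z\|\,\|x^{HB}_{opt}-x^{SC}\|$ are eventually dominated by the surplus $\bigl(\tfrac{1}{\sin(\alpha)^2}-1\bigr)\sum w_i\|\tilde{P}_{C_i}(x^k)-x^k\|^2$ coming from Theorem \ref{thm:MySCpertConvergenceResult}, using $\sin(\alpha/2)^2-\cos(\alpha)^2<0$ for $\alpha<\pi/3$. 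You instead work in explicit coordinates, convert Theorem \ref{thm:MySCpertConvergenceResult} into a lower bound on the coordinate of $z$ along the bisector direction (which exactly cancels the $\|p(x^k)\|^2/(4\sin^2(\alpha/2))$ term), and control the transverse coordinate $z_1$ by the cutter inequality at $x^{k-1}$; your computations check out, including $x^{HB}_{opt}=(-\|p(x^k)\|/2,\;\|p(x^k)\|/(2\tan(\alpha/2)))$ and the identity $\sin\alpha\tan(\alpha/2)=2\sin^2(\alpha/2)$. What your version buys is a threshold $\tilde{\alpha}$ that depends only on $\|p(x^{k-1})\|$, $\|p(x^k)\|$, $\lambda(x^{k-1})$ and $S$, hence is uniform over $z\in C$, whereas the paper's $\tilde{\alpha}$ explicitly depends on $\|x^{SC}-z\|$. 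One small correction to your closing remark: the triangle-inequality route is not actually too weak. Although $2\|x^{SC}-z\|\,\|x^{HB}_{opt}-x^{SC}\|$ contains a term of order $\|x^k-z\|\cdot\|p(x^k)\|$, the competing negative term scales like $\|p(x^k)\|^2/\sin(\alpha)^2$, so for any fixed $z$ the inequality holds once $\alpha$ is small enough; the price is only that $\tilde{\alpha}$ then depends on $z$, which the paper's existential statement permits. So your cutter bound at $x^{k-1}$ is not needed for the theorem as stated, but it strengthens the conclusion to a $z$-uniform one.
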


\begin{proof}
	From \eqref{eq:HBpertOptimalDistToTriangleTip} it follows that for $x^{SC} = x^k + \lambda^{SC}_k d^{SC}$ and $x^{HB}_{opt} = x^k+0.5\lambda^{HB}_{opt} (\bar{p}(x^k) + \bar{p}(x^{k-1})) $ it is true that
	
	\begin{align*}
	\|x^{HB}_{opt} - x^{SC}\| &= \sin(\alpha/2) \|x^{SC}-x^k\|\\
	& = \frac{\sin(\alpha/2)\|p(x^k)\|}{\sin(\alpha)}.
	\end{align*}
	
	Theorem \ref{thm:MySCpertConvergenceResult} yields
	
	\begin{align*}
	\|x^{HB}_{opt} - z\|^2 &\leq \|x^{SC}-z\|^2 +\|x^{HB}_{opt} - x^{SC}\|^2 + 2 \|x^{SC}-z\| \|x^{HB}_{opt} - x^{SC}\|\\
	&\leq \|x^k-z\|^2 -\frac{1}{\sin(\alpha)^2} \sum w_i(x^k) \|\tilde{P}_{C_i}(x^k)-x^k\|^2\\
	&\qquad +\|x^{HB}_{opt} - x^{SC}\|^2 + 2 \|x^{SC}-z\| \|x^{HB}_{opt} - x^{SC}\|
	\end{align*}
	
	To show that the heavy ball perturbation reduces the distance to $z$ even more than the unperturbed simultaneous projection, we examine
	
	\begin{align*}
	&\|x^{HB}_{opt} - z\|^2 - \left(\|x^k-z\|^2 - \sum w_i(x^k) \|\tilde{P}_{C_i}(x^k)-x^k\|^2\right)\\
	\leq& \|x^{HB}_{opt} - x^{SC}\|^2 + 2 \|x^{SC}-z\|\|x^{HB}_{opt} - x^{SC}\|-\frac{\cos(\alpha)^2}{\sin(\alpha)^2} \sum w_i(x^k) \|\tilde{P}_{C_i}(x^k)-x^k\|^2
	\end{align*}
	
	\begin{align*}
	&\|x^{HB}_{opt} - x^{SC}\|^2 + 2 \|x^{SC}-z\|\|x^{HB}_{opt} - x^{SC}\|-\frac{\cos(\alpha)^2}{\sin(\alpha)^2} \sum w_i(x^k) \|\tilde{P}_{C_i}(x^k)-x^k\|^2\\
	=& \left(\|p(x^k)\|\frac{\sin(\alpha/2)}{\sin(\alpha)}\right)^2 + 2 \|x^{SC}-z\|\|p(x^k)\|\frac{\sin(\alpha/2)}{\sin(\alpha)} -\frac{\cos(\alpha)^2}{\sin(\alpha)^2} \sum w_i(x^k) \|\tilde{P}_{C_i}(x^k)-x^k\|^2\\
	\leq &\left(\|p(x^k)\|\frac{\sin(\alpha/2)}{\sin(\alpha)}\right)^2 + 2 \|x^{SC}-z\|\|p(x^k)\|\frac{\sin(\alpha/2)}{\sin(\alpha)} -\frac{\cos(\alpha)^2 \|p(x^k)\|^2}{\sin(\alpha)^2}\\
	=& \frac{\|p(x^k)\|}{\sin(\alpha)}\left(\frac{\|p(x^k)\|}{\sin(\alpha)}(\sin(\alpha/2)^2-\cos(\alpha)^2)+2 \|x^{SC}-z\| \sin(\alpha/2)\right)
	\end{align*}
	
	For $\alpha < \pi/3$ we have
	$$\sin(\alpha/2)^2-\cos(\alpha)^2< 0.$$
	
	For all (finite) $\|p(x^k)\|,\|x^{SC}-z\| $ there exists $\tilde{\alpha} \in (0, \pi/3]$ such that for all $\alpha\in[0, \tilde{\alpha})$:
	
	$$\left(\frac{\|p(x^k)\|}{\sin(\alpha)}(\sin(\alpha/2)^2-\cos(\alpha)^2)+2 \|x^{SC}-z\| \sin(\alpha/2)\right) < 0$$
	
	and thus $\|x^{HB}_{opt} - z\|^2 < \|x^k-z\|^2 - \sum w_i (x^k)\|\tilde{P}_{C_i}(x^k)-x^k\|^2$.
\end{proof}

Theorem \ref{thm:MyHBpertConvergenceResult} implies that under the mentioned assumptions there always exists a value for $\epsilon_{\max}$ with $\epsilon_{\max} >0$ as parameter for the function $\tilde{c}$, which guarantees that if $\langle \bar{p}(x^k), \bar{p}(x^{k-1}) \rangle < -1 + \epsilon_{\max}$ the reduction achieved by the heavy ball perturbation is larger than the reduction achieved by the unperturbed simultaneous projection.

\subsection{The algorithm}\label{sec:Algorithm}

Now we combine the tools we presented in the previous sections in our algorithm.

To solve a constrained convex optimization problem \eqref{eq:GeneralOptProblem} we translate it into its epigraph representation \eqref{eq:CFPepigraph}. Then we use the level set scheme to transform it into a sequence $\{\boldsymbol{P}^s\}$ of CFPs . Next, we use simultaneous \eqref{eq:SPMiterDef} or cyclic projection \eqref{eq:CPMiterDef} to solve each CFP $\boldsymbol{P}^s$. If the use of perturbations is specified, we alter the sequence of iterates generated by the projection methods using the heavy ball \eqref{eq:HBouterPerturbationDef} or the surrogate constraint perturbation \eqref{eq:SCouterPertDef}.

\section{Numerical demonstrations}\label{sec:Num}

In this section we present our numerical results. These were achieved with an algorithm implemented according to the description in Section \ref{sec:Algorithm}. It uses the heavy ball and the surrogate constraint perturbation with the outer perturbation scheme \eqref{eq:OuterPerturbationScheme}. As the results in this section will show, both perturbations are eminently useful to speed up the convergence of the algorithm towards a solution compared to the unperturbed methods.

\subsection{Linear feasibility problem}

We present a linear feasibility problem and demonstrate the behavior of both the simultaneous and cyclic projection method with and without perturbations when we use them to solve the problem stated below. We show that in this example, the unperturbed versions of the methods converge slower than the perturbed versions. Furthermore, we demonstrate that the control sequence for the cyclic projection method has an influence on whether the condition triggering the perturbations is fulfilled.

\subsubsection{Problem formulation}

We consider the system of linear inequalities

\begin{align}\label{eq:AcademicExample}
A x &\leq b
\end{align}
where

\begin{align}
A &= \begin{pmatrix}
-1/\delta_{x_1} & - 1/\delta_{x_2} & - 1/\delta_{x_3}\\
1/\delta_{x_1} & - 1/\delta_{x_2} & - 1/\delta_{x_3}\\
1/\delta_{x_1} & 1/\delta_{x_2} & - 1/\delta_{x_3} \\
-1/\delta_{x_1} & 1/\delta_{x_2} & - 1/\delta_{x_3}
\end{pmatrix}, b =
\begin{pmatrix}
-1\\-1\\-1\\-1
\end{pmatrix}
\end{align}
and $x = (x_1, x_2, x_3)^T\in\mathbb{R}^3$. In the following, by inequality $i$ we refer to the inequality $\langle a_i , x \rangle\leq b_i$, where $a_i$ denotes the $i$-th row of the matrix $A$ and $b_i$ is the $i$-th coordinate of $b$ for $i \in\{1,2, 3,4\}$.

These linear inequalities define half-spaces. The separating hyperplanes $H_i$ of these half-spaces intersect the $x_1$-axis at $\pm\delta_{x_1}$, the $x_2$-axis at $\pm\delta_{x_2}$ and the $x_3$-axis at $\delta_{x_3}$.

In this example, we choose $\delta_{x_3} = 100$, $\delta_{x_1} = \tan(\beta) \delta_{x_3}/\sin(\alpha)$ and $\delta_{x_2} = \tan(\beta) \delta_{x_3}/\cos(\alpha)$ with $\alpha = 30^{\circ}$ and $\beta = 5^{\circ}$.

Next we demonstrate the behavior of simultaneous and cyclic projection with and without perturbations when we use them to solve \eqref{eq:AcademicExample}.

\subsubsection{Results}

In what follows, we occasionally (especially in the descriptions and legends of the figures and tables) use abbreviations for the simultaneous projection (SP) and the cyclic projection method (CP) as well as for the heavy ball perturbation (HB) and the surrogate constraint perturbation (SC). CP+HB means for example that cyclic projection was used together with heavy ball perturbation.

For all methods, we choose the starting point $x^0 = (15, 0, 0)^{T}$ and the parameters $\epsilon_{max} = 6\cdot 10^{-2}, \epsilon_{min} = 10^{-6}$ and $\lambda^{SC}_k = \|p(x^{k})\|^2 / \|d^{SC}\|^2$ as described in Section \ref{sec:SCPerturbation}. We consider an iterate $x^{\ast}$ to be a solution of \eqref{eq:AcademicExample}, if $\|Ax^{\ast} - b\|_{\infty} \leq 10^{-10}$ and $K$ denotes the number of iterations needed to find $x^{\ast}$.

In contrast to the step size $\lambda^{SC}_k$ of the surrogate constraint perturbation, the choice of the step size $ \lambda^{HB}_k$ of the heavy ball perturbation is not motivated in a geometrical way. It can only be chosen in an optimal way, if the solution to the problem is already known. We present results for three different values of $\lambda^{HB}_k$ to demonstrate the effects of choosing it in an empirical way.

First, we use the simultaneous projection to solve \eqref{eq:AcademicExample}. We choose  $\lambda(x) \equiv 1.9$ to generate a sequence of iterates, which alternates between fulfilling inequalities $\{1, 4\}$ and $\{2, 3\}$. In this way, we obtain a sequence of projection steps $\{p(x^k)\}$ with $\langle \bar{p}(x^k), \bar{p}(x^{k-1}) \rangle \in [-1 + \epsilon_{min}, -1+\epsilon_{max}]$. If we use perturbations in this setting, they are therefore triggered after every second unperturbed iteration.

The results of our calculations are given in Table \ref{tab:AcadSPM}. The unperturbed method exhibits comparably slow convergence speed due to the opposing projection steps, which offer little progress in the direction of the $x_3$-axis. Both the heavy ball and the surrogate constraint perturbation are able to speed up the iteration process significantly.

\begin{table}
\center
\caption{Iterations needed to find a feasible solution with $\lambda(x) \equiv 1.9$. SP converges slowly. Using perturbations accelerates the convergence significantly.}
\label{tab:AcadSPM}
\begin{tabular}{rccccc}
\hline\noalign{\smallskip}
& SP & \multicolumn{3}{c}{SP + HB} & SP + SC \\
& & $\lambda^{HB}_k =8$& $\lambda^{HB}_k =80$ & $\lambda^{HB}_k =800$& \\
\noalign{\smallskip}\hline\noalign{\smallskip}
K & 449 &58& 17& 4	&4\\
\noalign{\smallskip}\hline
\end{tabular}
\end{table}

Next, we solve \eqref{eq:AcademicExample} using the cyclic projection method with $\lambda(x)\equiv 1$ and the control sequence $\{j_1(\nu)\}_{\nu=0}^{\infty} = \{1, 2, 3, 4, 1,2,3,4,...\}$. This results in a sequence $\{p(x^k)\}$ of projection steps, which does not fulfill $\langle \bar{p}(x^k), \bar{p}(x^{k-1}) \rangle \in [-1 + \epsilon_{min}, -1+\epsilon_{max}]$ until the constraint violations fall below the tolerance of $10^{-10}$. Perturbations as defined in \eqref{eq:HBinnerPerturbationDef} or \eqref{eq:SCinnerPertDef} can therefore not be used in a meaningful way with this method and choice of parameters. The values of $\langle \bar{p}(x^k), \bar{p}(x^{k-1}) \rangle$ do not reflect the conflict inherent in the system of linear inequalities due to the choice of the control sequence.
It takes the unperturbed algorithm 1917 iterations to find a feasible solution $x^{\ast}$.

We now use the same control sequence in combination with $\lambda(x) \equiv 1.9$. This results in a sequence of iterates $\{x^k\}$ and projection steps  $\{p(x^k)\}$, which fulfill $\langle \bar{p}(x^k), \bar{p}(x^{k-1}) \rangle \in [-1 + \epsilon_{min}, -1+\epsilon_{max}]$ for some $k$. Therefore, we are able to use perturbations with this method and choice of parameters. The results of our calculations are presented in Table \ref{tab:AcadCPM_seq1}. This method with this particular choice of parameters is the quickest among our experiments to converge. We observe that for this choice of parameters, not all step sizes $\lambda_k^{HB}$ result in accelerated convergence behavior. Cyclic projection using the surrogate constraint perturbation, however, converges faster than any of the other variants of cyclic projection.

\begin{table}
\center
\caption{Iterations needed to find a feasible solution with $\lambda(x) \equiv 1.9$ and the control sequence $\{j_1(\nu)\}$. Not all perturbed versions of CP converge faster than unperturbed CP.}
\label{tab:AcadCPM_seq1}
\begin{tabular}{rccccc}
\hline\noalign{\smallskip}
& CP & \multicolumn{3}{c}{CP + HB} & CP + SC \\
& & $\lambda^{HB}_k =8$& $\lambda^{HB}_k =80$ & $\lambda^{HB}_k =800$& \\
\noalign{\smallskip}\hline\noalign{\smallskip}
K & 20 & 34&26&9 &4\\
\noalign{\smallskip}\hline
\end{tabular}
\end{table}

Finally, we alter the feasibility problem \eqref{eq:AcademicExample} by enlarging the linear inequality system with duplicates of the matrix rows $a_i$ and the values $b_i$ of the right hand side vector. We make this alteration in order to be able to give a control sequence, which both fits Definition \ref{eq:control} and results in the phenomena we will describe in the following. The new matrix $\tilde{A}\in\mathbb{R}^{8\times 3}$ and vector $\tilde{b}\in\mathbb{R}^8$ are given by

\begin{align}
\tilde{A} &\colonequals \left(a_1^T, a_3^T, a_1^T, a_3^T, a_2^T, a_4^T,a_2^T, a_4^T\right)^T \nonumber\\
\tilde{b}&\colonequals (b_1, b_3, b_1, b_3, b_2, b_4, b_2, b_4)^T \nonumber
\end{align}

The system of linear inequalities $\tilde{A}x\leq \tilde{b}$ of course has the same set of solutions as $Ax\leq b$.
We solve  $\tilde{A}x\leq \tilde{b}$ using the cyclic projection method with $\lambda = 1.9$ and the control sequence $\{j_2(\nu)\}_{\nu=0}^{\infty} =\{1, 2, 3, 4, 5, 6, 7, 8, 1, 2, 3, 4, 5, 6, 7, 8, ...\}$. By repeating pairs of almost opposing inequalities, we generate a sequence of projection steps $\{p(x^k)\}$, which fulfills  $\langle\bar{p}(x^k), \bar{p}(x^{k-1}) \rangle \in [-1 + \epsilon_{min}, -1+\epsilon_{max}]$ for some $k$. The number of iterations needed by the variants of the cyclic projection to find a feasible solution are given in Table \ref{tab:AcadCPM_seq2}. Here, all perturbations are able to accelerate the convergence. Again, cyclic projection using the surrogate constraint perturbation was the fastest to converge. In Figure \ref{fig:acadexcpmethodcomparison} we present the norm of the constraint violation $\|\max\{0, Ax^k-b\}\|$ plotted against the iteration index $k$. HB8, HB80 and HB800 denote the three versions of the heavy ball perturbation using $\lambda_k^{HB} = 8, 80$ or $800$.

\begin{table}
\center
	\caption{Iterations needed to find a feasible solution with $\lambda(x) \equiv 1.9$ and the control sequence $\{j_2(\nu)\}$. Perturbed CP converges faster than unperturbed CP.}
	\label{tab:AcadCPM_seq2}
	\begin{tabular}{rccccc}
		\hline\noalign{\smallskip}
		& CP & \multicolumn{3}{c}{CP + HB} & CP + SC \\
		& & $\lambda^{HB}_k =8$& $\lambda^{HB}_k =80$ & $\lambda^{HB}_k =800$& \\
		\noalign{\smallskip}\hline\noalign{\smallskip}
		K & 32 & 29&20&7 &3\\
		\noalign{\smallskip}\hline
	\end{tabular}
\end{table}

\begin{figure}
\center
	\includegraphics[width=0.75\textwidth]{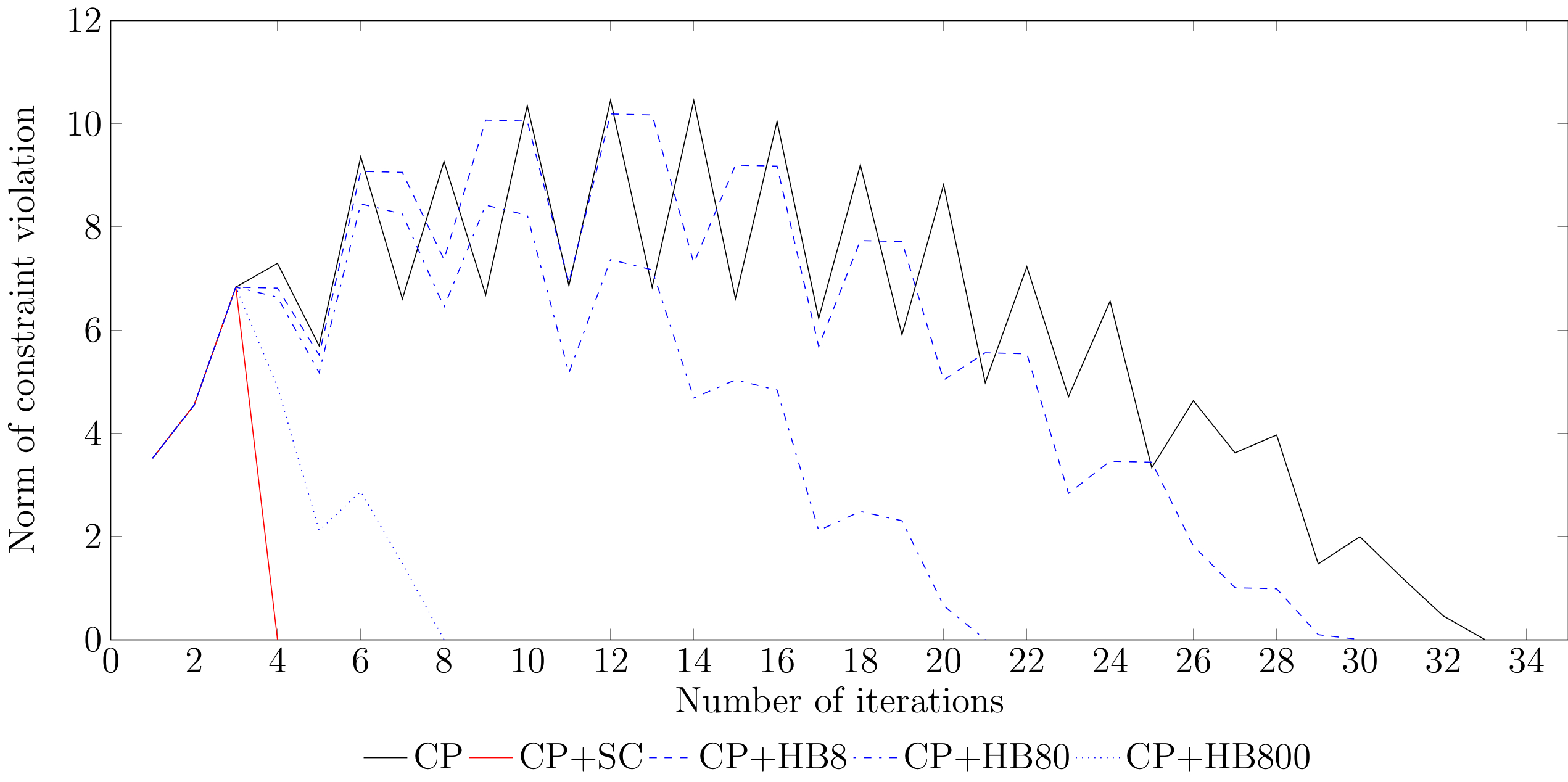}
	\caption{Comparison of the constrain violation norm $\|\max\{0, Ax^k-b\}\|$ for perturbed and unperturbed CP. CP using SC perturbation is the fastest and unperturbed CP is the slowest to converge to a feasible solution.}
	\label{fig:acadexcpmethodcomparison}
\end{figure}

\subsection{Intensity-modulated radiation therapy}

Intensity-modulated radiation therapy (IMRT) is used as one treatment option in clinical oncology. With it, physicians aim to destroy the tumor tissue with irradiation while sparing surrounding healthy organs as much as possible.
These goals are conflicting because of physical limitations like the proximity of the tumor to healthy organs and the attainable decrease of the dose values over a given distance. Therefore, the treatment planner usually has to decide for a compromise between several goals.

The IMRT planning problem, as we consider it in this paper, is to determine optimal intensity maps, i.e. a set of fluence intensity values, which causes a dose distribution in the patient's body which will best fulfill the clinical goals formulated by the physician. In contrast to the previous examples, the IMRT planning problem is a nonlinear one.

\subsubsection{Tools for modeling}

The vector $d$ of dose values received by each voxel in the patient's body when the fluence intensity given by the vector $x$ is applied can be calculated using the so-called dose matrix $P$. Due to physical reasons, the vector $x$ is restricted to be non-negative, which results in dose values $d= P \cdot x$, which are also non-negative.

We use the following set of functions to represent the dose prescriptions given by the clinical goals. All functions refer to a biological structure and evaluate the dose received by the voxels contained in the structure. From a mathematical point of view that structure is a set $\mathcal{O}$ of indices which correspond to the voxels contained in the biological structure. By $d_i = \langle p_i, x\rangle$ we denote the dose value received by the voxel with index $i$, where $p_i$ is the $i$-th row of $P$.

\paragraph{Quadratic upper tail penalty function}

The upper tail penalty function penalizes dose values of $d$ corresponding to the structure $\mathcal{O}$ which exceed a given threshold $U\in\mathbb{R}$.

$$f(d, \mathcal{O}) = \frac{1}{|\mathcal{O}|}\sum_{i\in \mathcal{O}} \max(0, d_i-U)^2 $$

\paragraph{Quadratic lower tail penalty function}

The lower tail penalty function penalizes dose values of $d$ corresponding to the structure $\mathcal{O}$ which fall below a given threshold $L\in\mathbb{R}$.

$$f(d, \mathcal{O}) = \frac{1}{|\mathcal{O}|}\sum_{i\in \mathcal{O}} \max(0, L-d_i)^2 $$

\paragraph{Equivalent uniform dose (EUD)}
The EUD function is related to the well-known EUD concept of Niemierko \cite{niemierko1997}. It penalizes dose values of $d$ corresponding to the structure $\mathcal{O}$ which deviate from 0.

$$f(d, \mathcal{O}) = \frac{1}{|\mathcal{O}|}\sum_{i\in \mathcal{O}} d_i^p $$

\paragraph{Tumor conformity}
The tumor conformity function is used to ensure an even dose distribution within the tumor volume. It penalizes dose values of $d$ corresponding to the structure $\mathcal{O}$ which deviate from a given reference value.

$$f(d, \mathcal{O}, d^{ref}) = \frac{1}{|\mathcal{O}|}\sum_{i\in \mathcal{O}} |d^{ref}-d_i|^p$$

\begin{figure}
\center
	\includegraphics[width=0.75\textwidth]{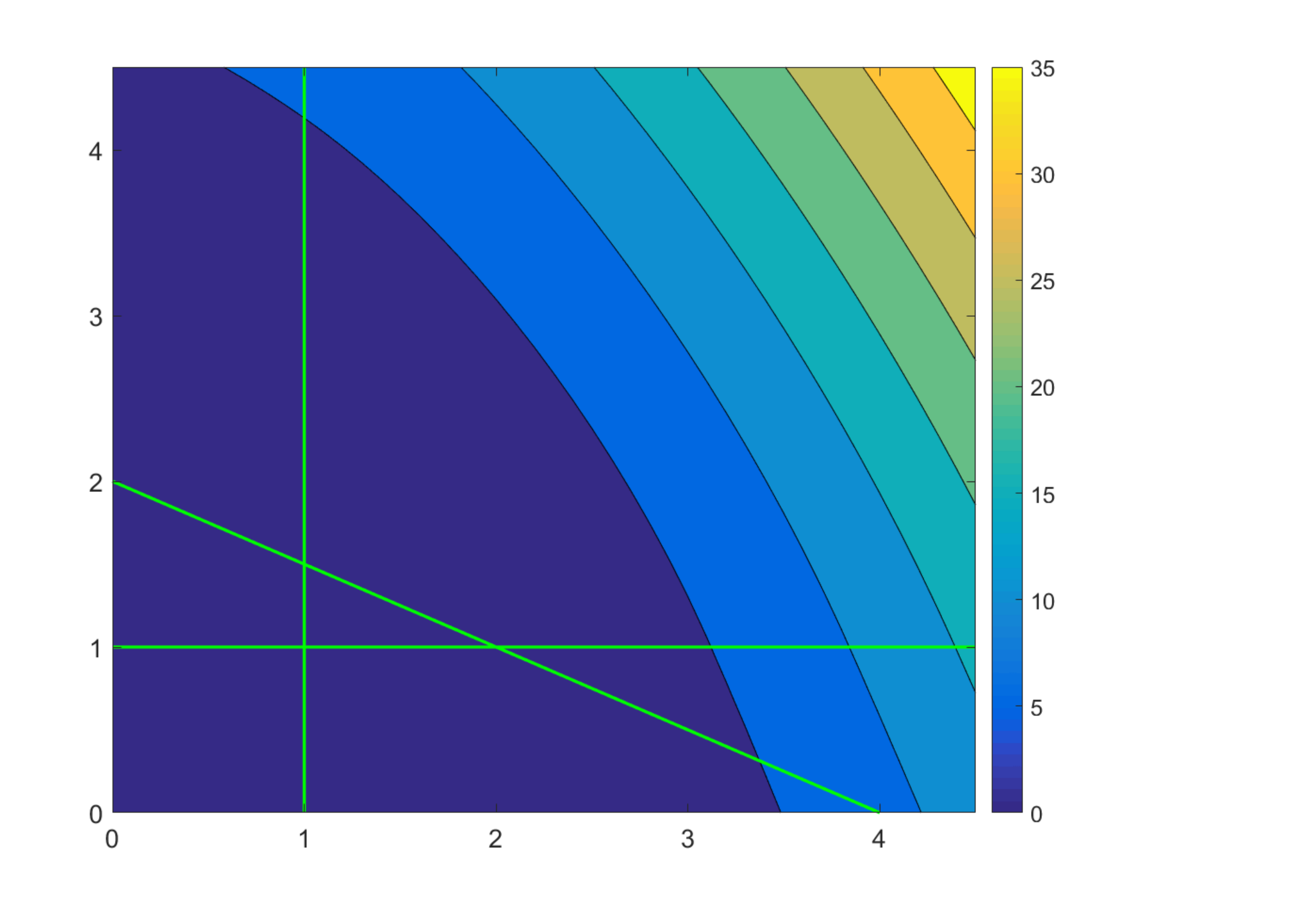}
	\caption{2D example for the upper tail penalty function. The green lines represent the hyperplanes separating feasible (towards the origin) from infeasible half-spaces. Isolines of the upper tail penalty function values are given according to the color map.}
	\label{fig:tailpenaltyscaledsquare}
\end{figure}

All of these functions take into account the system of linear (in)equalities resulting from $d_i = \langle p_i, x\rangle$ being less/ greater or equal than a right hand side value determined by the function parameters, where $i\in\mathcal{O}$. The functions measure the distance of the vector $x$ to the (separating) hyperplanes corresponding to the linear (in)equalities in a nonlinear way. Figure \ref{fig:tailpenaltyscaledsquare} illustrates this concept with a simple 2D example.

The given functions do not represent a complete list of dose evaluation functions used in clinical applications. A more complete survey of such functions can be found in \cite{rdl04} and \cite{sfom99}.
\subsubsection{The IMRT optimization problem}

In this work we choose four head neck cases for our numerical experiments. We consider a reduced set of biological structures and their dose evaluation functions, which focuses on the main conflict between irradiating the tumor volume on the one hand and sparing the myelon and parotids on the other hand. Additionally, we include the healthy tissue not associated with any of the structures mentioned above with the goal to keep the dose in this tissue as low as possible. The dose evaluation functions we use are given in Table \ref{tab:IMRTmodel} in detail.

We use a reduced set of biological structures for our calculations, because our focus is to demonstrate the effects of the mathematical methods. Therefore, the treatment plans resulting from our calculations do not fulfill all of the clinical goals a treatment planner would formulate for a full head neck case.

\begin{table}
\center
\caption{Dose evaluation functions of the considered biological structures}
\label{tab:IMRTmodel}
\begin{tabular}{clcc}
\hline\noalign{\smallskip}
	& structure & function name & parameters\\
\noalign{\smallskip}\hline\noalign{\smallskip}
$f_1$ & Left parotis & EUD & $p=2$\\
$f_2$ & Right parotis & EUD & $p=2$\\
$f_3$ & Myelon & EUD & $p=2$\\
$f_4$ & Unclassified healthy tissue & EUD & $p=2$\\
$f_5$ & Tumor volume & Tumor conformity & $p=2, d^{ref} = 60$\\
$g_1$ & Tumor volume & lower tail penalty & $L = 55$\\
$g_2$ & Tumor volume & upper tail penalty & $U = 66$\\
$g_3$ & Myelon & upper tail penalty & $U = 45$\\
\noalign{\smallskip}\hline
\end{tabular}
\end{table}

The IMRT optimization problem as we formulate it in this work is

\begin{align} \label{eq:IMRToptproblem}
\text{Minimize } f(x)&\\
\text{s.t. }g_j(x)&\leq 0\quad j\in J\nonumber\\
x&\geq 0\nonumber
\end{align}
where $f=\sum_{i\in I} f_i$, $I =\{1,2,3,4, 5\}  , J = \{1, 2, 3\}$ and $f_i, g_j$ as listed in Table \ref{tab:IMRTmodel}.

\subsubsection{Results}
We use the combination of methods described in Section \ref{sec:Algorithm} to solve the problem \eqref{eq:IMRToptproblem} for four different head neck cases. Results of these calculations are presented in Tables \ref{tab:IMRTiterationNumbersAndObjValsSPM},  \ref{tab:IMRTiterationNumbersAndObjValsCPM}, \ref{tab:IMRTnumPerturbations} and \ref{tab:IMRTspeedupFactor} and, in an exemplary manner for one of the cases, in Figures \ref{fig:c1methodcomparison}, \ref{fig:c1pertindices} and \ref{fig:c1DHVs}.
For our calculations we choose $\lambda(x) \equiv 1.9, \lambda^{SC}_k  = 1, \lambda^{HB}_k = 1$ and  $\epsilon_{max} =0.034, \epsilon_{min} = 10^{-8}$. We stop the algorithm if the projection method we use is unable to find a feasible solution of the current CFP after $10^3$ iterations. Then, the current CFP is assumed to be infeasible and we consider the solution of the previous CFP to be the result of our algorithm.

By $f^{\ast}_{\text{method, perturbation}}$ we denote the lowest objective function value, for which the algorithm using this method and perturbation is able to find a solution within the given maximum number of iterations per CFP $\boldsymbol{P}^s$ as described in \eqref{Problem:k_CFP}.
In the same way we denote by $K_{\text{method, perturbation}}$ the total number of iterations it takes the specified method using the specified perturbation to find the solution $x^{\ast}_{\text{method, perturbation}}$ with the optimal objective function value $f^{\ast}_{\text{method, perturbation}}$.

Tables \ref{tab:IMRTiterationNumbersAndObjValsSPM} and \ref{tab:IMRTiterationNumbersAndObjValsCPM} present the lowest objective function values achieved by the simultaneous or the cyclic projection method in an unperturbed manner or using either the heavy ball perturbation or the surrogate constraint perturbation. The values are given with a precision of $10^0$. Furthermore, the number of iterations needed to find the solutions are presented. For easier comparison, the percentages with respect to the values corresponding to the unperturbed methods are given in brackets.

Note that the optimal objective function values of the perturbed simultaneous projection method $f^{\ast}_{\text{SP, HB}}$ and $f^{\ast}_{\text{SP, SC}}$ (given in Table \ref{tab:IMRTiterationNumbersAndObjValsSPM}) are smaller than the optimal objective function value $f^{\ast}_{\text{SP}}$ of the unperturbed method. The same is true for the cyclic projection method.

When we use cyclic projection, the condition required to apply the perturbations given in equations \eqref{eq:HBinnerPerturbationDef} and \eqref{eq:SCinnerPertDef} is never met for two of the four head neck cases. We denote the values for  $f^{\ast}_{\text{method, perturbation}}$ and $K_{\text{method, perturbation}}$ by "$-$".

We observe that for all cases $K_{\text{SP, HB}}, K_{\text{SP, SC}} < K_{\text{SP}}$, but for some cases $K_{\text{CP, HB}}$,  $K_{\text{CP, SC}} > K_{\text{CP}}$. This means that in these cases, the perturbed method continues beyond solutions with the objective value $f^{\ast}_{\text{CP}}$, but in total takes more iterations than $K_{\text{CP}}$ to get there.

To answer the question, whether perturbed methods converge faster, measured at the same objective function value, we present Table \ref{tab:IMRTspeedupFactor}. There we give the number of iterations it takes the perturbed methods to find a solution with an objective function value less or equal than $f^{\ast}_{\text{SP}}$ or $f^{\ast}_{\text{CP}}$, normalized by $K_{\text{SP}}$ and  $K_{\text{CP}}$. These values are given with a precision of $10^{-4}$ and are an indicator for the acceleration of the iteration process caused by the perturbations we used. Note that in all cases the surrogate constraint perturbation is able to speed up the iteration process even more than the heavy ball perturbation.

Figure \ref{fig:c1methodcomparison} illustrates the progress of the different methods in an exemplary way for Case 1. It is notable that perturbations are used rather early in the iteration process when we use simultaneous projection and comparably late when we use cyclic projection. The indices of iterations in which perturbations are used by the different methods are illustrated in more detail by Figure \ref{fig:c1pertindices} for the same case as in Figure \ref{fig:c1methodcomparison}.

This phenomenon occurs due to the fact that simultaneous projection uses a weighted sum of all function gradients corresponding to violated constraints, whereas cyclic projection uses only gradient information of the next (with respect to the control sequence) violated constraint function.

In our model, there are groups of functions which correspond to conflicting goals. By summing the function gradients, simultaneous projection incorporates the information about the conflict between these groups of functions from a very early stage of the iteration process. While this results in almost opposing subsequent projection steps $p(x^{k-1}), p(x^k)$, which leads to slow convergence, it also triggers the perturbation of the iteration process.
When we use cyclic projection, the conflict between the groups of functions only becomes obvious when the subsequently (with respect to the control sequence) violated constraints have opposing function gradients.
Figure \ref{fig:c1pertindices} shows that this happens for IMRT cases rather late in the iteration process. The values in Table \ref{tab:IMRTspeedupFactor} indicate that methods using perturbations early converge faster than methods using them later in the iteration process.

Table \ref{tab:IMRTnumPerturbations} presents the number of iterations, in which perturbations are used. We observe that in all cases the heavy ball perturbation is applied more often than the surrogate constraint perturbation. Together with $K_{\text{SP, SC}} < K_{\text{SP, HB}}$ and $K_{\text{CP, SC}} < K_{\text{CP, HB}}$, this indicates that in our computations the surrogate constraint perturbation is more effective than the heavy ball perturbation.

Finally, in Figure \ref{fig:c1DHVs}, we present a cumulative dose volume histogram (DVH) in which we compare the dose distributions calculated for case 1 by simultaneous projection without perturbation and with the surrogate constraint perturbation. DVHs are a tool used by treatment planners to evaluate the quality of a fluence map and the resulting dose distribution in the patient's body. DVHs show, which percentage of the volume of a certain structure receives a dose greater or equal than the dose value on the horizontal axis. For the tumor volume, a treatment planner might want that 95\% of the volume receives at least a dose of 55 Gy and for the myelon, they might want that at most 5\% of the volume receives a dose greater than 45 Gy.

The solid lines in Figure \ref{fig:c1DHVs} represent the solution resulting from simultaneous projection without perturbation and the dashed lines correspond to the solution resulting from simultaneous projection with the surrogate constraint perturbation. We observe that the curves for the tumor volume and both parotids do not differ much, but the curves for the myelon are significantly lower for the perturbed method and therefore represent a more desirable dose distribution than the distribution resulting from the unperturbed method.

\begin{table}
\center
\caption{Iteration numbers and lowest objective function values achieved using perturbed and unperturbed SP. Perturbed SP reaches solutions with lower objective function values within less iterations.}
\label{tab:IMRTiterationNumbersAndObjValsSPM}
\begin{tabular}{rcccccc}
\hline\noalign{\smallskip}
& $f^{\ast}_{\text{SP}}$ &  $f^{\ast}_{\text{SP, HB}}$ &  $f^{\ast}_{\text{SP, SC}}$& $K_{\text{SP}}$ & $K_{\text{SP, HB}}$ & $K_{\text{SP, SC}}$ \\
\noalign{\smallskip}\hline\noalign{\smallskip}
Case 1 &3480&3464 (99.54\%)&3387 (97.33\%)	&7159&3437 (48.00\%)&2155 (30.10\%)\\
Case 2 &2378&2356 (99.07\%)&2317 (97.43\%)	&3523&2340 (66.42\%)&1108 (31.45\%)\\
Case 3 &3129&3056 (97.67\%)&3012 (96.26\%)	&4773&3458 (72.45\%)&1171 (24.53\%)\\
Case 4 &3098&2980 (96.19\%)&2941 (94.93\%)	&4496&3898 (86.70\%)&1292 (28.74\%)\\
\noalign{\smallskip}\hline
\end{tabular}
\end{table}

\begin{table}
\center
\caption{Iteration numbers and lowest objective function values achieved using perturbed and unperturbed CP. In only two of four cases perturbations are used and achieve lower objective function values than unperturbed CP.}
\label{tab:IMRTiterationNumbersAndObjValsCPM}
\begin{tabular}{rccccccc}
\hline\noalign{\smallskip}
& $f^{\ast}_{\text{CP}}$ &  $f^{\ast}_{\text{CP, HB}}$ &  $f^{\ast}_{\text{CP, SC}}$& $K_{\text{CP}}$ & $K_{\text{CP, HB}}$ & $K_{\text{CP, SC}}$ \\
\noalign{\smallskip}\hline\noalign{\smallskip}
	Case 1 &3563&3490 (97.95\%)&3420 (95.99\%)	&6665&6103 (91.57\%)&4044 (60.66\%)\\
Case 2 &2424&-&-	&4484&-&-\\
Case 3 &3178&3083 (97.01\%)&3052 (96.04\%)	&4238&5929 (139.90\%)&5035 (118.81\%)\\
Case 4 &3093&-&-&	5280&-&-\\
\noalign{\smallskip}\hline
\end{tabular}
\end{table}

\begin{table}
\center
\caption{The number of perturbations used by the perturbed projection methods. For all cases HB perturbation is used more often than SC perturbation.}
\label{tab:IMRTnumPerturbations}
\begin{tabular}{rcccc}
\hline\noalign{\smallskip}
& SP + HB & SP + SC & CP + HB & CP + SC\\
\noalign{\smallskip}\hline\noalign{\smallskip}
Case 1 &803&283	&789&374\\
Case 2 &534&101	&0&0\\
Case 3 &922&166	&451&349\\
Case 4 &1074&180	&0&0\\
\noalign{\smallskip}\hline
\end{tabular}
\end{table}

\begin{table}
\center
\caption{The fraction of $K_{\text{SP}}$ or $K_{\text{CP}}$ needed by the perturbed methods to find a solution with objective function value $\leq f^{\ast}_{\text{SP}}$ or $f^{\ast}_{\text{CP}}$. SC perturbation achieves lower values than HB perturbation, in particular if the perturbed method is SP.}
\label{tab:IMRTspeedupFactor}      
\begin{tabular}{rcccc}
\hline\noalign{\smallskip}
& SP + HB & SP + SC & CP + HB & CP + SC\\
\noalign{\smallskip}\hline\noalign{\smallskip}
Case 1 &0.4801&0.2122	&0.7685&0.4308\\
Case 2 &0.6642&0.2191	&-&-\\
Case 3 &0.4324&0.1402	&0.9122&0.8554\\
Case 4 &0.4121&0.1417	&-&-\\
\noalign{\smallskip}\hline
\end{tabular}
\end{table}

\begin{figure}
\center
	\includegraphics[width=0.75\textwidth]{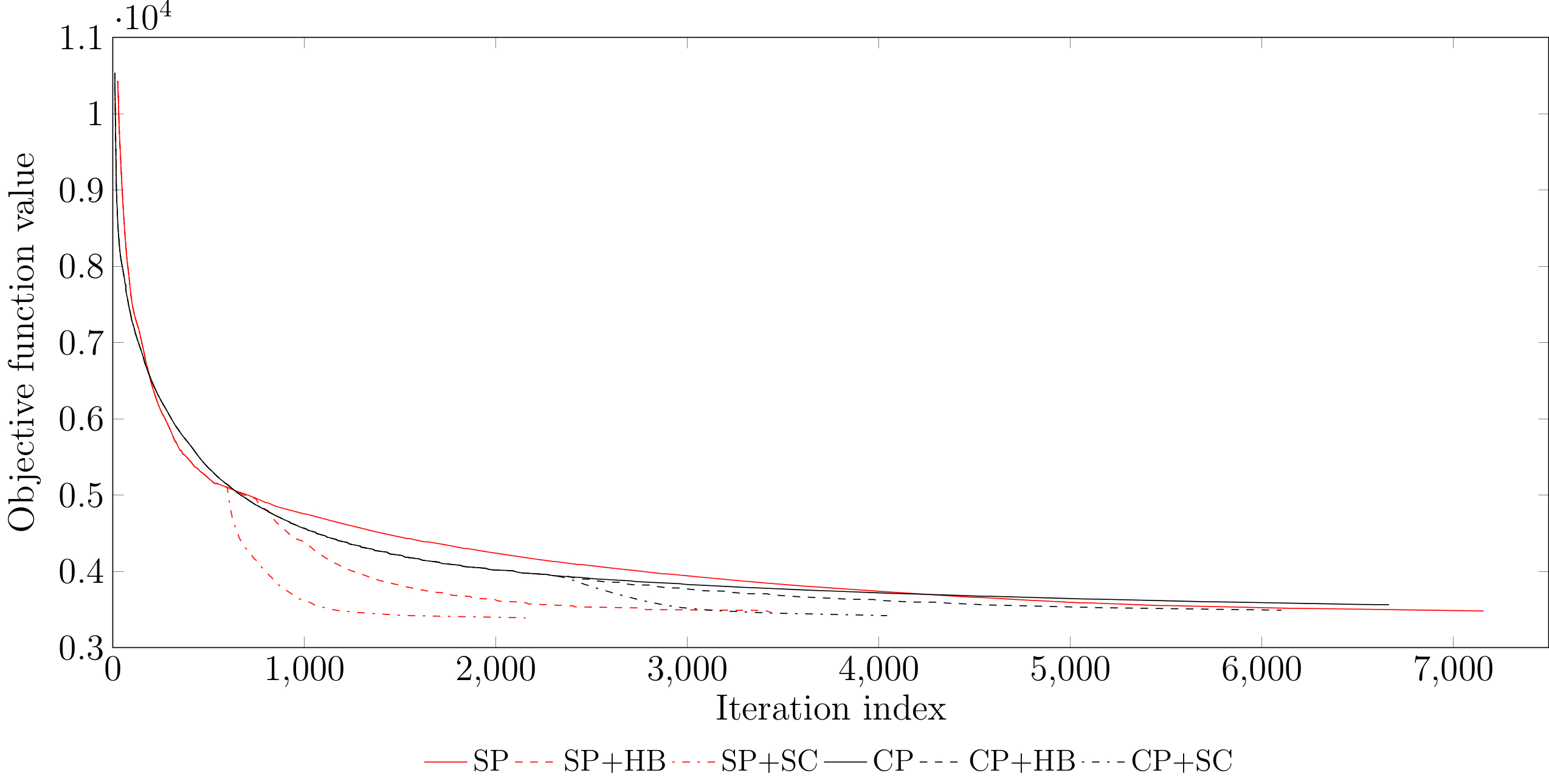}
	\caption{Objective function values achieved by different methods plotted against the required number of iterations. SP using SC perturbation achieves the lowest value and is the quickest to converge. Unperturbed CP is the slowest method and produces the solution with the highest objective function value.}
	\label{fig:c1methodcomparison}
\end{figure}

\begin{figure}
\center
	\includegraphics[width=0.75\textwidth]{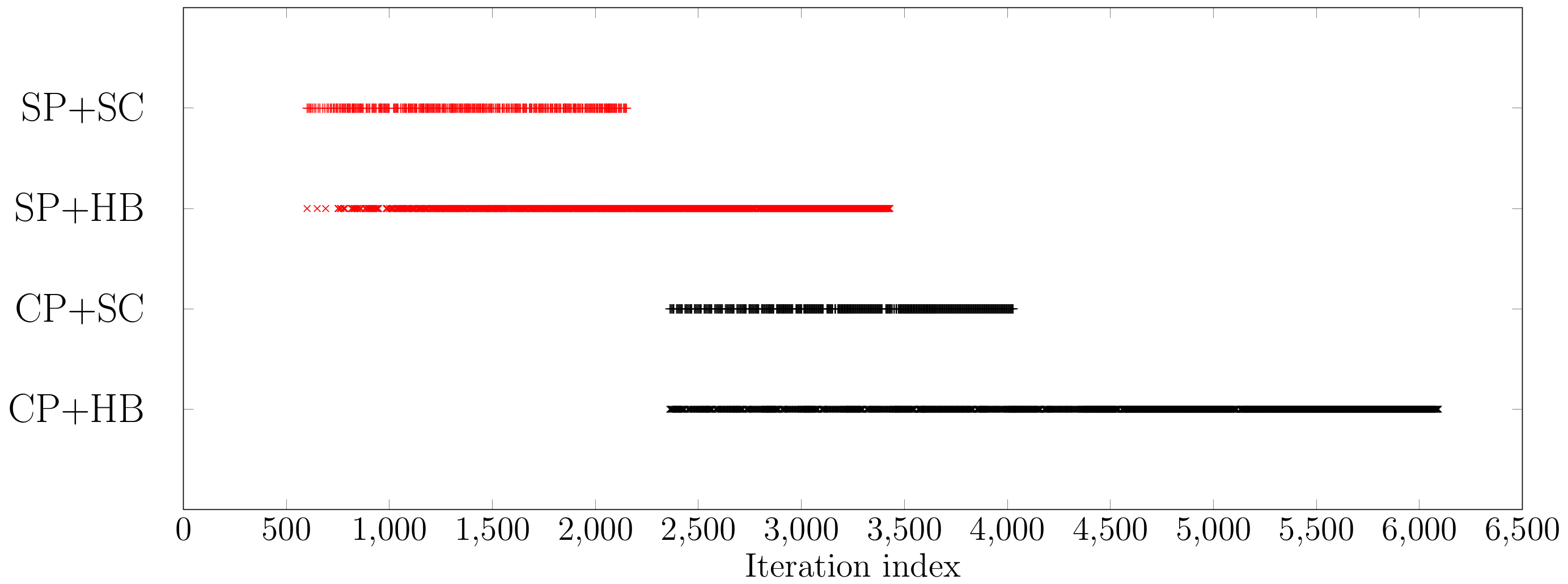}
	\caption{Indices of perturbed iterations. SP uses perturbations earlier than CP.}
	\label{fig:c1pertindices}
\end{figure}

\begin{figure}
\center
	\includegraphics[width=0.75\textwidth]{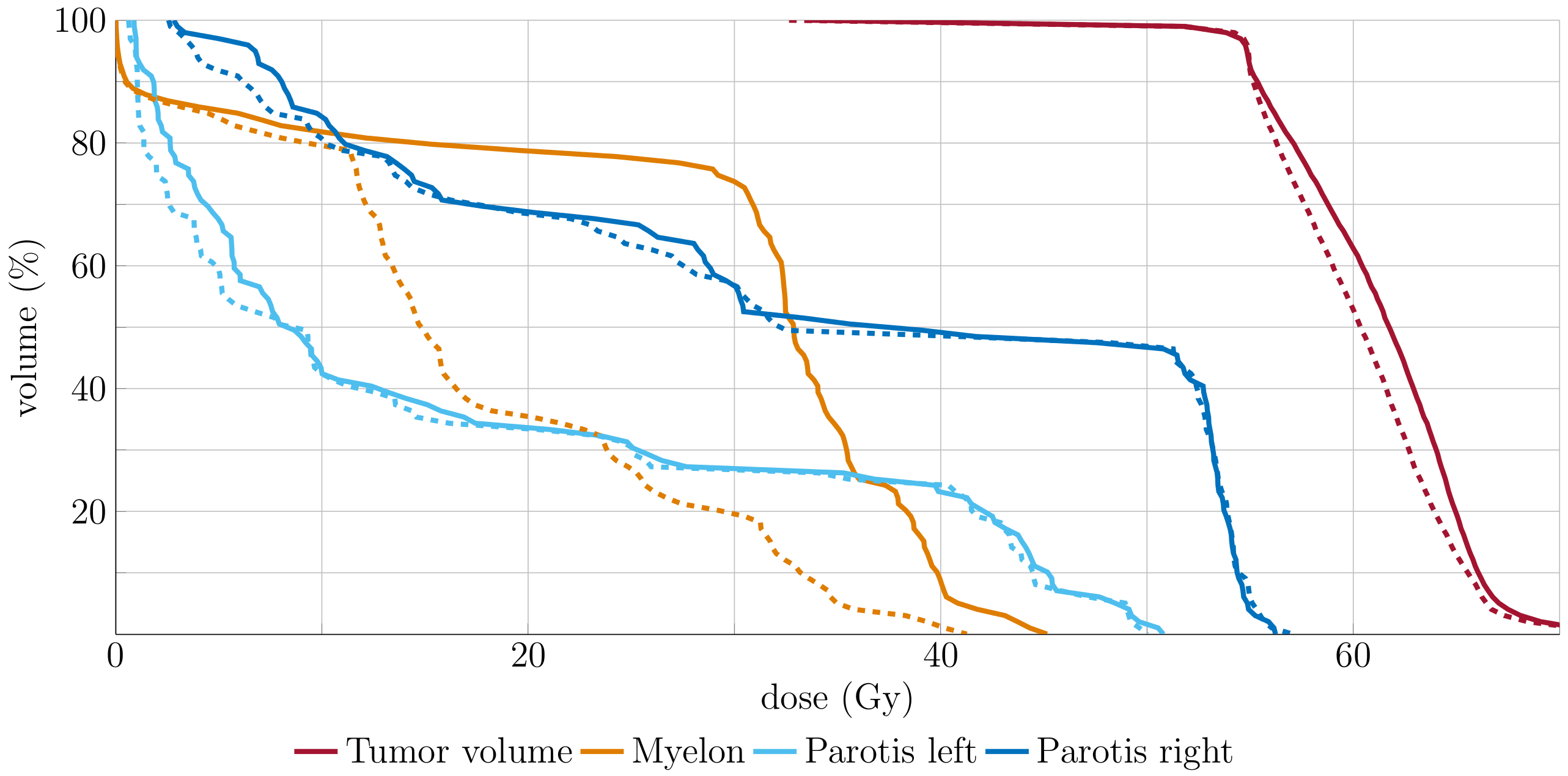}
	\caption{Dose volume histogram resulting from solution doses produced by SP (solid lines) and SP using SC perturbations (dashed lines).}
	\label{fig:c1DHVs}
\end{figure}

\section{Conclusions}\label{sec:Conclusions}

In this paper we transformed a general constrained convex optimization problem into a sequence of feasibility problems via the level set scheme. We solved each feasibility problem using either the simultaneous or cyclic subgradient projection method. We exploited the fact that both projection methods are bounded perturbation resilient and modified the iteration scheme of the projection methods with the heavy ball and the surrogate constraint perturbation.

Our numerical results demonstrate that the perturbed projection methods converge faster than their unperturbed counterparts, both for the linear feasibility problems we discussed and those arising from IMRT treatment planning.
Applying perturbations early in the iteration process, as it was done for the simultaneous projection, yields better results than applying them later, like for the cyclic projection.

In our computations the perturbed versions of the simultaneous projection method offer the biggest improvement in both iteration numbers and objective function values of the solutions. The simultaneous projection method using the surrogate constraint perturbation surpasses the objective function value achieved by the unperturbed simultaneous projection method after only 14-22\% of the number of iterations needed by the unperturbed method. The least improvement is achieved by the cyclic projection using the heavy ball perturbation. This method surpasses the objective value achieved by the unperturbed cyclic projection after 43-91\% of the iterations needed by the unperturbed method.

In our calculations the surrogate constraint perturbation outperforms the heavy ball perturbation every time. Initial experiments indicate that this discrepancy results from the choice of the step length $\lambda^{HB}$. More detailed results are left for future publications. In our problem setting the computational cost for the surrogate constraint perturbation is slightly higher than for the heavy ball perturbation (12$n$+1 FLOPS vs. 11$n$ FLOPS, where $n$ is the length of the decision variable vector $x\in\mathbb{R}^n$). In problem settings where this difference is critical the heavy ball perturbation might be a more attractive approach.

Our observations suggest that both perturbations introduced by us can be used as an acceleration technique when zigzagging behavior occurs. Hence these and other perturbations should be further investigated.\\

\section*{Acknowledgment}
We wish to thank the editor and the anonymous referees for the thorough analysis and review, their comments and suggestions helped tremendously in improving the quality of this paper and made it suitable for publication.

\noindent
\textbf{\underline{Conflict of Interest:}} The authors declare that they have no conflict of interest.

\end{document}